\newtheorem{theorem}{Theorem}[section]
\newtheorem{lemma}[theorem]{Lemma}
\theoremstyle{definition}
\newtheorem{definition}[theorem]{Definition}
\newtheorem{example}[theorem]{Example}
\newtheorem{corrollary}[theorem]{Corrollary}
\theoremstyle{remark}
\newtheorem{remark}[theorem]{Remark}
\numberwithin{equation}{section}
\begin{document}

\title{Octonion Integers and Tight 5-Designs}

\author{Benjamin Nasmith}
\curraddr{Department of Mathematics and Computer Science, Royal Military College of Canada, Kingston, ON}
\email{ben.nasmith@gmail.com}

\date{\today}


\keywords{Projective t-designs, octonions, Leech lattice, generalized hexagon}

\begin{abstract}
    The two strictly projective tight 5-designs are the lines spanned by the short vectors of the Leech lattice and a set of points in the octonion projective plane that define a generalized hexagon of order (2,8). A previous paper introduced a common construction that can generate these two tight 5-designs. This paper describes the same construction in terms of octonion arithmetic. An octonion integer construction of the Leech lattice is described using properties of the octonion integers taken modulo 2. The Leech lattice automorphism group is constructed from octonion reflections. The common construction and the Suzuki subgroup chain of Leech lattice automorphisms are described in terms of octonion integers.
\end{abstract}

\maketitle

\section{Introduction}

Let $\mathbb{F}$ be a real division composition algebra---one of $\mathbb{R}, \mathbb{C}, \mathbb{H}, \mathbb{O}$.
A previous paper \cite{nasmith_octonions_2022} introduced a common construction for $t$-designs in $\mathbb{RP}^{\rho d -1}$ and $\mathbb{FP}^{\rho-1}$ and showed how this common construction can yield the only two strictly projective tight $5$-designs, which exist in $\mathbb{RP}^{23}$ and $\mathbb{OP}^2$.
The example used in that paper was obtained by computation and given to establish the existence of the common construction of these two tight $5$-designs.
That example matches the coordinates used in Wilson's octonion Leech lattice construction \cite{wilson_octonions_2009, wilson_conways_2011}.
This paper instead examines the common construction of these two tight $5$-designs using octonion integers and their symmetries.

We begin with a review of \cite{nasmith_octonions_2022}. 
In what follows, let rank $\rho$ and degree $d$ be positive integers with $\rho$ greater than $1$ and  $d = [\mathbb{F}:\mathbb{R}]$ when $\rho$ is greater than $2$.
We will call vector $x = (x_1, x_2, \ldots, x_\rho)$ in $\mathbb{F}^\rho$ respectively \textit{commutative} or \textit{associative} when the coefficients $x_1, x_2, \ldots, x_\rho$ belong to a common commutative or associative subalgebra of $\mathbb{F}$.
The norm $N(x)$ is defined as $N(x) = x x^\dagger$, where $x^\dagger$ is the conjugate transposed of row vector $x$. 
When $x$ is associative we can define the \textit{projector} $[x] = x^\dagger x/ N(x)$, which is a primitive idempotent hermitian matrix in $\mathrm{Herm}(\rho, \mathbb{F})$. Projectors of this form belong to the projective space $\mathbb{FP}^{\rho-1}$, represented as primitive idempotent hermitian matrices, as described in \cite{hoggar_t-designs_1982}.
If $r$ in $\mathbb{F}^\rho$ is commutative or $\mathbb{F}$ is associative then we can define the following pair of \textit{reflection isometries} acting on $\mathbb{F}^\rho$ and $\mathbb{FP}^{\rho-1}$:
\begin{align*}
    \mathtt{W}_r: x \mapsto x (I_\rho - 2[r]), \quad 
     [x] \mapsto (I_\rho - 2[r]) [x] (I_\rho - 2[r]).
\end{align*}
That both actions of $\mathtt{W}_r$ are in fact respectively isometries of $\mathbb{F}^\rho$ and $\mathbb{FP}^{\rho-1}$ is verified in \cite{nasmith_octonions_2022}.
We can use these actions to construct spherical and projective $t$-designs, or simply designs. 
A \textit{spherical design} $X$ is a finite subset of points $X \subset \Omega_{\rho d}$ where $\Omega_{\rho d}$ is a sphere. A \textit{projective design} is a finite subset of points $X \subset \mathbb{FP}^{\rho-1}$ where $\mathbb{FP}^{\rho-1}$ is a projective space. A design $X$ has certain properties such as tightness, angle set $A$, and strength $t$ which are reviewed in \cite{nasmith_octonions_2022}. Of note, the only two strictly projective tight $5$-designs are shown to have a common construction in \cite{nasmith_octonions_2022}, where the \textit{common construction} is defined as follows:
\begin{definition}[Common Construction]
    \label{commonconstruction}
    Let $r_1, r_2, \ldots, r_n$ be commutative vectors in $\mathbb{F}^\rho$ (or let $\mathbb{F}$ be associative).
    Let $G$ and $H$ be the group of isometries generated by $\mathtt{W}_{r_1}, \mathtt{W}_{r_2}, \ldots, \mathtt{W}_{r_n}$ respectively acting on $\mathbb{F}^\rho$ and $\mathbb{FP}^{\rho-1}$. 
    If $G$ is finite then the orbit of $r_1, r_2, \ldots, r_n$ defines a spherical design on $\Omega_{\rho d}$, since $\mathbb{F}^\rho \cong \mathbb{R}^{\rho d}$, with a corresponding projective design on $\mathbb{RP}^{\rho d - 1}$. 
    If $H$ is finite then the orbit of $[r_1], [r_2], \ldots, [r_n]$ defines a projective design on $\mathbb{FP}^{\rho-1}$.
\end{definition}

Suitable vectors $r_1, \ldots, r_6$ exist such that the common construction above yields the two strictly projective tight $5$-designs \cite{nasmith_octonions_2022}. The coordinates given in \cite{nasmith_octonions_2022} ensure that the tight $5$-design on $\mathbb{RP}^{23}$ is given by the lines spanned by the short vectors of Wilson's octonion Leech lattice construction \cite{wilson_octonions_2009}.
In this paper we will instead work with octonion integers to describe the common construction of these two tight $5$-designs. We will also clarify Leech lattice constructions using octonion integer triples and describe the Suzuki chain subgroups of Leech lattice automorphisms as octonion reflection groups. 

\section{Octonion Integers}

This section reviews octonion algebra and octonion integer ring concepts in order to provide a complete description of the common construction of \cite{nasmith_octonions_2022}  in terms of octonion integers. Many important properties of octonions and their integer rings are described in \cite{springer_octonions_2000}, \cite{baez_octonions_2002} \cite{conway_quaternions_2003}. 

A \textit{composition algebra} is an algebra equipped with a nondegenerate quadratic form $N$ that satisfies the composition rule $N(xy) = N(x)N(y)$. A composition algebra is \textit{unital} if it also includes an identity element. A division composition algebra lacks isotropic vectors, i.e. $N(x) = 0$ only for $x = 0$. 
A major theorem due to Hurwitz confirms that there are precisely four unital division composition algebras $\mathbb{F}$ over the real numbers: the real field $\mathbb{R}$, the complex field $\mathbb{C}$, the quaternions $\mathbb{H}$, and the octonions $\mathbb{O}$. 
The octonion algebra $\mathbb{O}$ contains the others as subalgebras.

The corresponding \textit{inner product} $\langle x,y\rangle =  N(x+y) - N(x) - N(y)$ is twice the standard Euclidean inner product. 
This means that $N(x) = \frac{1}{2}\langle x,x\rangle$, and the standard Euclidean inner product is $\frac{1}{2}\langle x,y\rangle$.
\begin{remark}
    In our definition of inner product $\langle x,y\rangle$ we omit the usual factor of $\frac{1}{2}$ for convenience when working with octonion integers modulo $2$ in what follows. This also follows the convention in \cite{springer_octonions_2000}.
\end{remark}
The \textit{real component} of an octonion $x$ is the component projected onto the identity: $\mathrm{Re}(x) = \frac{1}{2}\langle 1, x\rangle$. The difference $\mathrm{Im}(x) = x - \mathrm{Re}(x)$ is the \textit{imaginary component} of $x$.
The \textit{octonion conjugate} is defined as $\overline{x} = 2\mathrm{Re}(x) - x$.
The octonion product ensures that $N(x) = x \overline{x} = \overline{x} x$ and that every octonion satisfies the following characteristic equation:
\begin{align*}
    x^2 - 2\mathrm{Re}(x) x + N(x) = 0.
\end{align*}

In what follows we will denote by $\mathbb{R}(x_1, x_2, \ldots, x_n)$ the subalgebra of $\mathbb{O}$ generated by the products and $\mathbb{R}$-linear combinations of octonions $1, x_1,x_2,\ldots, x_n$. 
Any octonion not contained in the real subalgebra, so that $x \ne \mathrm{Re}(x)$, generates a complex subalgebra $\mathbb{R}(x) \cong \mathbb{C}$. Any two octonions $x,y$ generating distinct complex subalgebras $\mathbb{R}(x) \ne \mathbb{R}(y)$ will generate a quaternion subalgebra $\mathbb{R}(x,y) \cong \mathbb{H}$. Finally, any three octonions $x,y,z$ that pairwise generate distinct quaternion subalgebras $\mathbb{R}(x,y)$, $\mathbb{R}(y,z)$, $\mathbb{R}(x,z)$ will generate the full octonion algebra $\mathbb{R}(x,y,z) = \mathbb{O}$. 
Every octonion belongs to some complex and quaternion subalgebra of $\mathbb{O}$. 
The complex numbers form a commutative and associative algebra, while the quaternion algebra is not commutative but still associative. 
The octonions are neither commutative nor associative but still have many special properties and symmetries.

An \textit{octonion algebra automorphism} is an invertible $\mathbb{R}$-linear map $\sigma: \mathbb{O} \rightarrow \mathbb{O}$ that also preserves the octonion product, so that $\sigma(xy) = \sigma(x)\sigma(y)$. The group $\mathrm{Aut}(\mathbb{O})$ of all such octonion algebra automorphisms is a Lie group of type $G_2$.
The group $\mathrm{Aut}(\mathbb{O})$ is transitive on \textit{imaginary units}, namely octonions $i$ such that $\mathrm{Re}(i) = 0$ and $N(i) = 1$.
It follows that $\mathrm{Aut}(\mathbb{O})$ is transitive on the complex subalgebras of $\mathbb{O}$, which each have the form $\mathbb{C} \cong \mathbb{R}(i)$ for some imaginary unit $i$. 
Likewise, $\mathrm{Aut}(\mathbb{O})$ is transitive on ordered pairs of orthogonal imaginary units, namely all $(i,j)$ such that $\langle i,j\rangle = 0$. 
This ensures that $\mathrm{Aut}(\mathbb{O})$ is transitive on quaternion subalgebras of $\mathbb{O}$ since these all have the form $\mathbb{H} \cong \mathbb{R}(i,j)$ for some pair of orthogonal imaginary units $(i,j)$. 
Finally, a \textit{basic triple} is an ordered triple of orthogonal imaginary units $(i,j,l)$, where $i,j,l$ pairwise generate distinct quaternion subalgebras.
The group $\mathrm{Aut}(\mathbb{O})$ is also transitive on basic triples, so we can select any basic triple $(i,j,l)$ in $\mathbb{O}$ without loss of generality.

\begin{remark}
    For any quaternion subalgebra $\mathbb{H}\subset\mathbb{O}$, the group $\mathrm{Aut}(\mathbb{O})$ contains an involution fixing $\mathbb{H}$ and multiplying the orthogonal component by $-1$. That is, for any basic triple $(i,j,l)$, the map $(i,j,l) \mapsto (i,j,-l)$ defines an octonion algebra automorphism fixing $\mathbb{H} = \mathbb{R}(i,j)$.
\end{remark}

As described in \cite{conway_quaternions_2003}, we now define the more familiar octonion \textit{standard basis} $\{i_t \mid t \in \mathrm{PL}(7) \}$, indexed by the projective line $\mathrm{PL}(7) = \{\infty\}\cup \mathbb{F}_7$, in terms of some basic triple $(i,j,l)$:
\begin{align*}
    i_\infty &= 1, & i_0 &= -(ij)l, & i_1 &= il, & i_2 &= i,\\
    i_3 &= j, & i_4 &= l, & i_5 &= ij, & i_6 &= jl.
\end{align*}
The units in this basis multiply as expected according to their properties in the quaternion subalgebras they pairwise define. 
That is, for all $t\ne \infty$, the set $\{1,i_t, i_{t+1}, i_{t+3}\}$ forms a standard quaternion basis with $i_{t}i_{t+1}i_{t+3} = -1$. 
The $\mathrm{PL}(7)$ indexing of the standard basis vectors exhibits certain helpful octonion algebra automorphisms, namely those defined on the basis by $i_t \mapsto i_{t+1}$ and $i_t \mapsto i_{2t}$ (here $1 = i_\infty$ is fixed and the remaining indices are computed modulo $7$). 

We will call $\mathbb{Z}$ the \textit{rational integers}, to distinguish from octonion integer rings in what follows. 
A \textit{Gravesian integer ring} is all $\mathbb{Z}$-linear combinations of some standard basis \cite[p. 100]{conway_quaternions_2003}. 
Equivalently, any basic triple $(i,j,l)$ defines a Gravesian integer ring $\mathbb{Z}(i,j,l)$, consisting of all $\mathbb{Z}$-linear combinations of $1, i, j, l$ and their products, which include the standard basis vectors.
Since a Gravesian integer ring contains a basic triple, and since $\mathrm{Aut}(\mathbb{O})$ is transitive on basic triples, the octonion automorphism group is transitive on Gravesian integer rings. 
This means we can select a representative and speak of \textit{the} Gravesian integer ring without loss of generality.

The Gravesian integers are an example of an octonion \textit{order}: a subring of the octonion algebra where each subring element $x$ has $\mathbb{Z}$-valued $2\mathrm{Re}(x)$ and $N(x)$ \cite[p. 100]{conway_quaternions_2003}.
If an octonion order contains basic triple $(i,j,l)$ it must also contain the corresponding standard basis $\{i_t\mid t \in \mathrm{PL}(7)\}$ defined above.
Therefore, any octonion order containing a basic triple also contains the corresponding Gravesian integer ring.
An \textit{octonion arithmetic} is a maximal order. 
It is known that there are precisely seven octonion arithmetics containing any given Gravesian integer ring, and that these form a single orbit under the octonion algebra automorphism subgroup generated by $i_t \mapsto i_{t+1}$ \cite[p. 100]{conway_quaternions_2003}. It follows that any octonion arithmetic containing a basic triple is one of seven containing the Gravesian integer ring defined by that basic triple.

We want to distinguish one of the seven octonion arithmetics containing the Gravesian integers as canonical. We will do so using properties of some underlying basic triple. 
Basic triple $(i,j,l)$ defines a unique pair of opposite imaginary units in the standard basis corresponding to all possible triple products of $i,j,l$, namely $\{i_0, - i_0\}$ (recall that $i_0 = -(ij)l = l(ij)$, etc.).
We will select a canonical octonion arithmetic for basic triple $(i,j,l)$ that distinguishes $i_0$ from the other units in the standard basis defined by that basic triple.
To do so, we note that there are three quaternion double bases in $\{\pm i_t \mid \mathrm{PL}(7)\}$ containing $\pm i_0$, namely $\{\pm 1, \pm i_0, \pm i_r, \pm i_{3r}\}$ for $r= 1,2,4$ (indices computed modulo $7$).
The rings $\mathbb{Z}(i_0, \omega_r)$ with $\omega_r = \frac{1}{2}\left(-1 + i_0 + i_r + i_{3r}\right)$, for $r$ one of $1,2,4$, are isomorphic to the \textit{Hurwitz integer ring} $\mathsf{H}$ \cite[p. 57]{conway_quaternions_2003}.  
The three Hurwitz integer rings $\mathbb{Z}(i_0, \omega_1)$, $\mathbb{Z}(i_0, \omega_2)$, $\mathbb{Z}(i_0, \omega_4)$ together generate one of the seven arithmetics containing the basic triple $(i,j,l)$.
Indeed, this arithmetic has the form $\mathbb{Z}(\omega_1, \omega_2, \omega_4)$. 
Accordingly, we define the \textit{canonical arithmetic} $\mathsf{O}$ containing basic triple $(i,j,l)$ as the ring,
\begin{align*}
    \mathsf{O} = \mathsf{O}_{i,j,l} = \mathbb{Z}(\omega_{1}, \omega_{2}, \omega_{4}), \quad 
    \omega_{r} = \frac{1}{2}\left(-1 + i_0 + i_r + i_{3r}\right).
\end{align*}
The ring $\mathsf{O}$ is called the \textit{octavian integers} in \cite[p. 99]{conway_quaternions_2003} but we will call it the \textit{octonion integer ring}.
The remaining six octonion arithmetics containing $(i,j,l)$ are given by the orbit of $\mathsf{O}$ under the automorphism defined by $i_t \mapsto i_{t + 1}$.

We now verify that any octonion arithmetic containing a basic triple is isomorphic to the canonical arithmetic defined above. 

\begin{lemma}
    The octonion algebra automorphism group is transitive on octonion arithmetics containing at least one basic triple.
\end{lemma}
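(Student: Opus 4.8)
The plan is to reduce the statement to the two transitivity facts already in hand: that $\mathrm{Aut}(\mathbb{O})$ acts transitively on basic triples, and that the seven octonion arithmetics containing a fixed Gravesian integer ring form a single orbit under the order-$7$ automorphism $\nu\colon i_t \mapsto i_{t+1}$. Fix once and for all the reference basic triple $(i,j,l)$ used to define the canonical arithmetic $\mathsf{O}$, together with its Gravesian subring $\mathbb{Z}(i,j,l)$. Now let $\mathsf{A}$ be an arbitrary octonion arithmetic containing some basic triple $(i',j',l')$. By transitivity on basic triples, choose $\sigma \in \mathrm{Aut}(\mathbb{O})$ with $\sigma(i',j',l') = (i,j,l)$; the goal will be to correct $\sigma$ by a power of $\nu$ so that the composite carries $\mathsf{A}$ onto $\mathsf{O}$.

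First I would check that $\sigma(\mathsf{A})$ is again an octonion arithmetic. An algebra automorphism preserves the product, fixes $1$, and preserves $N$, hence also preserves the trace $2\mathrm{Re}$; it therefore carries orders to orders, and since it is invertible it carries maximal orders to maximal orders. Next, because $\sigma$ is a homomorphism and $\mathsf{A}$ is closed under products, $\sigma(\mathsf{A})$ contains $\sigma(i',j',l') = (i,j,l)$ and hence all products of $i,j,l$, so by the earlier observation that any order containing a basic triple contains the associated Gravesian ring, $\sigma(\mathsf{A}) \supseteq \mathbb{Z}(i,j,l)$. Thus $\sigma(\mathsf{A})$ is one of the seven arithmetics over $\mathbb{Z}(i,j,l)$. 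Since $\nu$ permutes the standard basis (and fixes $1$) it stabilizes $\mathbb{Z}(i,j,l)$ and therefore permutes those seven arithmetics; as they form a single $\langle\nu\rangle$-orbit, there is an exponent $k$ with $\nu^{k}\bigl(\sigma(\mathsf{A})\bigr) = \mathsf{O}$. Then $\nu^{k}\sigma \in \mathrm{Aut}(\mathbb{O})$ sends $\mathsf{A}$ to $\mathsf{O}$, so every octonion arithmetic containing a basic triple lies in the $\mathrm{Aut}(\mathbb{O})$-orbit of $\mathsf{O}$, which is precisely transitivity.

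The main obstacle I anticipate is not computational but foundational: one must be careful to justify that maximality of an order is an isomorphism-invariant property (so that $\sigma(\mathsf{A})$ really is an \emph{arithmetic} and not merely an order), and to pin down exactly what the cited "single orbit under $i_t \mapsto i_{t+1}$" statement provides — namely that $\nu$ fixes the Gravesian ring and genuinely permutes the seven arithmetics over it, so that the orbit in question is the orbit acting on the set containing $\sigma(\mathsf{A})$. Both points are formal once stated, and no fresh arithmetic with octonion units is needed beyond what \cite{conway_quaternions_2003} already supplies.
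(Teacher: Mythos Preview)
Your proposal is correct and follows essentially the same line as the paper's proof: map an arbitrary basic triple to the reference one by transitivity of $\mathrm{Aut}(\mathbb{O})$ on basic triples, observe that the image arithmetic is then one of the seven over the fixed Gravesian ring, and finish with a power of the cycle $i_t\mapsto i_{t+1}$. The paper phrases it as mapping between two arbitrary arithmetics $\mathsf{O}$ and $\mathsf{O}'$ rather than mapping everything to a fixed canonical $\mathsf{O}$, and is somewhat terser about why the image of an arithmetic under an algebra automorphism is again an arithmetic, but the argument is the same.
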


\begin{proof}
    Let $\mathsf{O}$ be an octonion arithmetic containing a basic triple $(i,j,l)$. It follows that $\mathsf{O}$ contains the Gravesian integer ring defined by $(i,j,l)$ and is one of the seven isomorphic arithmetics containing that order.
    Likewise, let $\mathsf{O}'$ be an octonion arithmetic containing a basic triple $(i',j',l')$, one of seven isomorphic arithmetics containing this basic triple. 
    The image of $\mathsf{O}'$ under any octonion automorphism is also an isomorphic arithmetic containing the image of basic triple $(i', j', l')$. 
    The octonion algebra automorphism group is transitive on basic triples, which means that there exists an automorphism $\sigma$ such that $\sigma(i') = i$, $\sigma(j') = j$, $\sigma(l') = l$.
    Since $\sigma(\mathsf{O}')$ is also an arithmetic and contains $(i,j,l)$, it must be one of the seven arithmetics containing this basic triple. Therefore $\mathsf{O}$ and $\sigma(\mathsf{O}')$ are in the same orbit of the automorphism subgroup generated by cycle $i_t \mapsto i_{t+1}$.
    Therefore an automorphism exists mapping $\mathsf{O}'$ to $\mathsf{O}$.
\end{proof}

Using the \textit{double} Euclidean inner product $\langle x,y\rangle = N(x+y) - N(x) - N(y)$, the canonical arithmetic $\mathsf{O}$ of basic triple $(i,j,l)$ has the geometry of an $\mathtt{E}_8$ lattice. 
Since we have $N(x) = \frac{1}{2}\langle x,x\rangle$, the units of $\mathsf{O}$ correspond to $\mathtt{E}_8$ roots and we can select a simple root system as a basis. 

\begin{remark}
For the canonical arithmetic $\mathsf{O}$ defined above, the following Coxeter-Dynkin diagram provides a simple $\mathtt{E}_8$ root system:

\begin{align*}
    \dynkin[edge length=.75cm, labels = {\alpha_1, \alpha_2, \alpha_3, \alpha_4, \alpha_5, \alpha_6, \alpha_7, \alpha_8}]{E}{8}
\end{align*}

\begin{align*}
    \alpha_1 &= \frac{1}{2}(-i_1+i_5+i_6+i_0), &
    \alpha_2 &= \frac{1}{2}(-i_1-i_2-i_4-i_0), \\
    \alpha_3 &= \frac{1}{2}(i_2+i_3-i_5-i_0), &
    \alpha_4 &= \frac{1}{2}(i_1-i_3+i_4+i_5), \\
    \alpha_5 &= \frac{1}{2}(-i_2+i_3-i_5+i_0), &
    \alpha_6 &= \frac{1}{2}(i_2-i_4+i_5-i_6), \\
    \alpha_7 &= \frac{1}{2}(-i_1-i_3+i_4-i_5), &
    \alpha_8 &= \frac{1}{2}(-1 + i_1-i_4+i_6). \\
\end{align*}
These particular simple roots have been chosen so that the highest root $\beta$ is opposite the identity,
\begin{align*}
    \beta = \dynkin[edge length=.75cm, labels*= {2,3,4,6,5,4,3,2}]{E}{8} = - 1,
\end{align*}
and also so that $\alpha_1,\ldots, \alpha_7$ are the simple roots of an $\mathtt{E}_7$ sublattice of purely imaginary octonion integers. This choice of simple roots is not uniquely defined by these properties. 
\end{remark}

The ring automorphism group $\mathrm{Aut}(\mathsf{O})$ has type $G_2(2) \cong \mathrm{PSU}_3(3):2$ and order $12096$. 
We can represent $\mathrm{Aut}(\mathsf{O})$ as a finite subgroup of $\mathrm{Aut}(\mathbb{O})$, the Lie group of automorphisms.

\begin{lemma}
    Every element of $\mathrm{Aut}(\mathsf{O})$ is a restriction of a unique element of $\mathrm{Aut}(\mathbb{O})$ to $\mathsf{O} \subset \mathbb{O}$.
\end{lemma}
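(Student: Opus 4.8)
The plan is to exploit the fact, just recorded, that with the double inner product $\mathsf{O}$ is an $\mathtt{E}_8$ lattice, so $\mathsf{O}$ is a free $\mathbb{Z}$-module of rank $8 = \dim_{\mathbb{R}}\mathbb{O}$ and any $\mathbb{Z}$-basis of $\mathsf{O}$ is an $\mathbb{R}$-basis of $\mathbb{O}$. A ring automorphism $\sigma$ of $\mathsf{O}$ is in particular additive, hence $\mathbb{Z}$-linear, and it fixes $1$ because $\sigma(1)$ is a two-sided identity of $\mathsf{O}$ and the identity of a unital (not necessarily associative) ring is unique. So an element of $\mathrm{Aut}(\mathsf{O})$ is the same data as a $\mathbb{Z}$-linear multiplicative bijection $\mathsf{O}\to\mathsf{O}$, and the question is whether it comes from $\mathrm{Aut}(\mathbb{O})$.

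First I would fix a $\mathbb{Z}$-basis $e_1,\ldots,e_8$ of $\mathsf{O}$, which is then an $\mathbb{R}$-basis of $\mathbb{O}$, and given $\sigma\in\mathrm{Aut}(\mathsf{O})$ define $\tilde\sigma\colon\mathbb{O}\to\mathbb{O}$ to be the $\mathbb{R}$-linear extension of $\sigma$, i.e. the unique $\mathbb{R}$-linear map with $\tilde\sigma(e_k)=\sigma(e_k)$. Since $\sigma$ is a ring automorphism, $\{\sigma(e_k)\}$ is again a $\mathbb{Z}$-basis of $\mathsf{O}$, hence an $\mathbb{R}$-basis of $\mathbb{O}$, so $\tilde\sigma$ is a linear automorphism of $\mathbb{O}$ restricting to $\sigma$ on $\mathsf{O}$. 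Then I would check multiplicativity on basis vectors: because $\mathsf{O}$ is closed under multiplication, $e_k e_l\in\mathsf{O}$ and $\tilde\sigma(e_k e_l)=\sigma(e_k e_l)=\sigma(e_k)\sigma(e_l)=\tilde\sigma(e_k)\tilde\sigma(e_l)$. Since octonion multiplication is $\mathbb{R}$-bilinear and $\tilde\sigma$ is $\mathbb{R}$-linear, both $(x,y)\mapsto\tilde\sigma(xy)$ and $(x,y)\mapsto\tilde\sigma(x)\tilde\sigma(y)$ are $\mathbb{R}$-bilinear, so agreement on all pairs of basis vectors forces agreement on all of $\mathbb{O}\times\mathbb{O}$. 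Hence $\tilde\sigma\in\mathrm{Aut}(\mathbb{O})$.

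For uniqueness, suppose $\tau\in\mathrm{Aut}(\mathbb{O})$ also restricts to $\sigma$ on $\mathsf{O}$; then $\tau$ and $\tilde\sigma$ are two $\mathbb{R}$-linear maps agreeing on $\mathsf{O}$, which spans $\mathbb{O}$ over $\mathbb{R}$, so $\tau=\tilde\sigma$. This shows that restriction to $\mathsf{O}$ gives an injection from $\{\varphi\in\mathrm{Aut}(\mathbb{O}):\varphi(\mathsf{O})=\mathsf{O}\}$ into $\mathrm{Aut}(\mathsf{O})$ whose image is all of $\mathrm{Aut}(\mathsf{O})$, which is exactly the claim. I do not expect a genuine obstacle here: the whole argument is forced once one knows $\mathsf{O}$ has full $\mathbb{R}$-rank in $\mathbb{O}$; the only point deserving a sentence of care is that an abstract ring automorphism of $\mathsf{O}$ is a priori only $\mathbb{Z}$-linear, so one must rebuild the $\mathbb{R}$-linear (hence $\mathbb{O}$-algebra) structure and observe that this rebuilding is unambiguous.
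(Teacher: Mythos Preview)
Your argument is correct and is genuinely different from the paper's proof. You use the purely structural fact that $\mathsf{O}$ has full $\mathbb{Z}$-rank $8$ in $\mathbb{O}$, so any $\mathbb{Z}$-linear multiplicative bijection extends uniquely to an $\mathbb{R}$-linear one by bilinearity of the product; this is clean, computation-free, and would work verbatim for any order of full rank in any finite-dimensional $\mathbb{R}$-algebra. The paper instead argues constructively: it quotes the known isomorphism type $\mathrm{Aut}(\mathsf{O})\cong G_2(2)$ of order $12096$, exhibits explicit elements of $\mathrm{Aut}(\mathbb{O})$ (the conjugations $x\mapsto\omega^{-1}x\omega$ by order-$3$ units, together with the involution $(i,j,l)\mapsto(i,j,-l)$) whose restrictions to $\mathsf{O}$ generate the full ring automorphism group, verified by GAP computation; uniqueness is then obtained by checking that only the identity in $\mathrm{Aut}(\mathbb{O})$ fixes a basic triple. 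Your route is more elementary and more general; the paper's route, while relying on outside facts and machine verification, has the side benefit of producing an explicit generating set for $\mathrm{Aut}(\mathsf{O})$ inside $\mathrm{Aut}(\mathbb{O})$, which is used in the surrounding discussion.
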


\begin{proof}
    The group $\mathrm{Aut}(\mathsf{O})$ is known to be of type $G_2(2) \cong \mathrm{PSU}_3(3):2$, with cardinality $12096$ \cite[pp. 132-134]{wilson_finite_2009}. A ring automorphism group preserves units and we can represent $\mathrm{Aut}(\mathsf{O})$ faithfully using a permutation representation acting on the $240$ units of $\mathsf{O}$.
    Any map acting on $\mathbb{O}$ of the form $x \mapsto a^{-1} x a$ with $\mathrm{Re}(a^3) = a^3$ is an octonion algebra automorphism \cite[p. 98]{conway_quaternions_2003}. Computation using GAP on a canonical copy of $\mathsf{O}$ verifies that the $56$ units $\omega$ in $\mathsf{O}$ of order $3$ define permutations on the units of $\mathsf{O}$ of the form $x \mapsto \omega^{-1} x \omega$, and these permutations generate a ring automorphism group of order $6048$ and type $\mathrm{PSU}_3(3)$. 
    Suppose that $\mathsf{O}$ contains basic triple $(i,j,l)$ as described above. There exists an octonion algebra automorphism such that $(i,j,l) \mapsto (i,j,-l)$, since $(i,j,-l)$ is also a basic triple. 
    This octonion algebra automorphism fixes the subalgebra generated by $i$ and $j$ while negating the perpendicular component of $\mathbb{O}$. 
    Computation confirms that this algebra automorphism also permutes the units in $\mathsf{O}$ and is not contained in the $\mathrm{PSU}_3(3)$ ring automorphism subgroup given above. 
    Therefore the ring automorphism group $\mathrm{Aut}(\mathsf{O})$ has a permutation representation on the $240$ units of $\mathsf{O}$ and a matrix representation as endomorphisms of $\mathbb{O}$ in $\mathrm{Aut}(\mathbb{O})$. That is, we can characterize each element of $\mathrm{Aut}(\mathsf{O})$ as an automorphism in $\mathrm{Aut}(\mathbb{O})$ restricted to $\mathsf{O}$. 
    Finally, we show uniqueness. Let $\mathsf{O}$ be the canonical arithmetic of basic triple $(i,j,l)$. The orbit of ordered triple $(i,j,l)$ under the action of $\mathrm{Aut}(\mathsf{O})$ has length $12096$. 
    That is, the only ring automorphism in $\mathrm{Aut}(\mathsf{O})$ fixing $(i,j,l)$ is the identity. 
    Let $\sigma$ be an octonion automorphism fixing basic triple $(i,j,l)$. It follows that $\sigma$ also fixes the standard basis defined by this basic triple. Since $\sigma$ is an automorphism of an $\mathbb{R}$-algebra that fixes a basis it must be the identity automorphism.  
    It follows that the only algebra automorphism that restricts to the ring identity automorphism is the identity. Therefore, each ring automorphism is a restriction of a unique algebra automorphism. 
\end{proof}

\begin{remark}
    We can also describe $\mathrm{Aut}(\mathsf{O})$ as the stabilizer of $\mathsf{O}$ in $\mathrm{Aut}(\mathbb{O})$. 
    That is, we can take the subgroup of octonion automorphisms mapping basic triple $(i,j,l)$ to $(i',j',l')$ such that $\mathsf{O}_{i,j,l} = \mathsf{O}_{i',j',l'}$, i.e. such that the canonical arithmetics defined by the two triples are equivalent.
    This includes the maps $(i,j,l) \mapsto (\omega^{-1} i \omega, \omega^{-1} j \omega, \omega^{-1} l \omega)$ for any $\omega$ in $\mathsf{O}$ with $\omega^3 = 1$ and $(i,j,l) \mapsto (i,j,-l)$.
    Under the $\mathrm{Aut}(\mathsf{O})$ action generated by these maps, the basic triple $(i,j,l)$ belongs to an orbit of $12096$ basic triples for which $\mathsf{O}$ is the canonical arithmetic. 
\end{remark}

Octonion integers $\mathsf{O}$ form a non-associative ring and have the property that every ideal is a two-sided principle ideal of the form $n\mathsf{O}$, for $n$ a rational integer \cite[pp. 109-110]{conway_quaternions_2003}. 
The quotient ring $\mathsf{O}/2\mathsf{O}$ is a finite simple non-associative ring.
In fact, the ring $\mathsf{O}/2\mathsf{O}$ is precisely the unique finite octonion algebra over the field  with two elements $\mathbb{F}_2$ \cite[pp. 19-22]{springer_octonions_2000}.
The automorphism group of the ring $\mathsf{O}/2\mathsf{O}$ is the automorphism group $G_2(2)$ of this ring as an $\mathbb{F}_2$-algebra. 

The residue classes of $\mathsf{O}/2\mathsf{O}$ (of the form $x + 2\mathsf{O}$) each have representatives $x$ of minimal norm either $0$, $1$, or $2$.
With respect to the \textit{standard Euclidean inner product} $\frac{1}{2}\langle x,y\rangle$, $\mathsf{O}$ is isomorphic to the scaled $\mathtt{E}_8/\sqrt{2}$ lattice, with $240$ norm $1$ and $2160$ norm $2$ elements:
\begin{align*}
    |\mathsf{O}/2\mathsf{O}| = 
    1 + \frac{240}{2} + \frac{2160}{16}.
\end{align*}
Following \cite[p. 136]{conway_quaternions_2003} we will call the $16$ norm $2$ representatives of a residue class a \textit{frame}.
Each frame is geometrically eight orthogonal pairs of opposite norm $2$ vectors.
The norm $1$ and $2$ elements of $\mathsf{O}$ form the following orbits under the action of $\mathrm{Aut}(\mathsf{O})$:
    \begin{enumerate}
        \item The zero of $x^2 -2x + 1$, the identity $1$.
        \item The zero of $x^2 +2x + 1$, the element $-1$.
        \item The $126$ zeros of $x^2 + 1$, order $4$ imaginary units denoted $i$.
        \item The $56$ zeros of $x^2 + x + 1$, order $3$ units denoted $\omega$.
        \item The $56$ zeros of $x^2 - x + 1$, order $6$ units $-\omega$.
        \item The $126$ zeros of $x^2 -2 x + 2$, norm $2$ elements $1+i$.
        \item The $126$ zeros of $x^2 + 2 x + 2$, norm $2$ elements $-1-i$.
        \item The $576$ zeros of $x^2 - x + 2$, norm $2$ elements denoted $\lambda$.
        \item The $576$ zeros of $x^2 + x + 2$, norm $2$ elements $-\lambda$.
        \item The $756$ zeros of $x^2 + 2$, norm $2$ elements denoted $i + j$.
    \end{enumerate}

To summarize, when working with the octonion integers we can use the arithmetic $\mathsf{O}$ without loss of generality since an arithmetic containing a standard basis (indeed a basic triple) is unique up to automorphism. Furthermore, we can select a root $\lambda$ of $x^2 + x + 2$ in $\mathsf{O}$ without loss of generality since any choice is unique up to automorphism.
This will allow us to construct octonion Leech lattices using properties of octonion integers but without reference to a particular choice of coordinates.
In what follows we will also make use of the finite ring $\mathsf{O}/2\mathsf{O}$ to simplify computations and proofs. 

\section{Octonion Integer Leech Lattices}

In what follows we will make use of certain theorems described in \cite{lepowsky_e8-approach_1982} to identify $\mathtt{E}_8$ sublattices of $\mathsf{O}$ and Leech sublattices of $\mathsf{O}^3$. 
First we describe a method to identify the $\mathtt{E}_8$ and Leech lattices using the classification of unimodular lattices in low dimensions.

\begin{lemma}
\label{latticeidentification}
    The Gosset lattice $\mathtt{E}_8$ is the unique unimodular lattice in $\mathbb{R}^8$ with minimal norm at least $2$.
    The Leech lattice $\Lambda_{24}$ is the unique unimodular lattice in $\mathbb{R}^{24}$ with minimal norm at least $4$. 
\end{lemma}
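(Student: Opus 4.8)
\emph{Proof proposal.} The plan is to deduce both statements from the classification of positive-definite integral unimodular lattices in the relevant dimension; the lemma itself contributes no real computation, only the bookkeeping of reading minimal norms off a finite list.

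For the eight-dimensional claim, I would recall that up to isometry there are exactly two unimodular lattices in $\mathbb{R}^8$: the cubic lattice $\mathbb{Z}^8$ and the Gosset lattice $\mathtt{E}_8$. (This is classical and follows, for instance, from the mass formula, or from the classification of root lattices together with gluing.) The lattice $\mathbb{Z}^8$ contains its standard basis vectors, which have norm $1$, so it is excluded by the hypothesis; the lattice $\mathtt{E}_8$ is even with minimal norm $2$. Hence $\mathtt{E}_8$ is the unique unimodular lattice in $\mathbb{R}^8$ with minimal norm at least $2$. One can also avoid quoting the whole list: by Elkies' characterisation, $\mathbb{Z}^8$ is the only unimodular lattice in dimension $8$ all of whose characteristic vectors have norm $\geq 8$, so any unimodular $L \neq \mathbb{Z}^8$ has a characteristic vector of norm strictly less than $8$ and congruent to $0$ modulo $8$, hence of norm $0$; thus $0$ is characteristic, $L$ is even, and the unique even unimodular lattice in dimension $8$ is $\mathtt{E}_8$.

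For the twenty-four-dimensional claim I would split into the even and the odd case. If $L \subset \mathbb{R}^{24}$ is even unimodular with minimal norm at least $4$, then Niemeier's classification applies: there are exactly $24$ even unimodular lattices in $\mathbb{R}^{24}$, each determined up to isometry by the root system spanned by its norm-$2$ vectors, and exactly one of them, the Leech lattice $\Lambda_{24}$, has empty root system and hence minimal norm at least --- in fact exactly --- $4$; the other $23$ contain norm-$2$ vectors and are excluded. If $L \subset \mathbb{R}^{24}$ is odd unimodular, one shows $\min(L) \le 3$: this is part of the Conway--Sloane enumeration of the odd $24$-dimensional unimodular lattices, where the maximal minimal norm is $3$, attained only by the odd Leech lattice; it also follows from shadow theory, since an odd unimodular lattice has a nonempty shadow $S$ and the Rains--Sloane bound $\min(L) \le 2\lfloor 24/24\rfloor + 2 = 4$ has equality only for even lattices in this dimension. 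Combining the two cases, $\Lambda_{24}$ is the unique unimodular lattice in $\mathbb{R}^{24}$ with minimal norm at least $4$.

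The genuine content is external: the dimension-$8$ and, especially, the dimension-$24$ classifications are substantial theorems (the mass formula, Niemeier's root-system analysis, and the shadow machinery for the odd case), and here they are invoked rather than reproved. The one subtlety internal to the lemma is that ``unimodular'' must be read in the integral self-dual sense, so odd lattices are included; a bare appeal to Niemeier's list handles only the even case. I expect confirming that no odd $24$-dimensional unimodular lattice attains minimal norm $4$ to be the only point where anything beyond citation is required.
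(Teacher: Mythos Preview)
Your proposal is correct and follows essentially the same approach as the paper: both arguments invoke the classification of unimodular lattices in dimensions $8$ and $24$ and read off the minimal norms, explicitly treating the odd case separately in dimension $24$ (the paper cites the Conway--Sloane enumeration of the $156$ odd lattices and singles out the odd Leech lattice with minimal norm $3$, exactly as you do). Your anticipation that the odd case is the only internal subtlety is spot on; the extra alternatives you mention (Elkies' characterisation, shadow bounds) are not in the paper but are harmless embellishments of the same strategy.
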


\begin{proof}
    Every unimodular lattice is either odd (type I) or even (type II).
    A unimodular lattice with a norm $1$ vector is of the form $\mathbb{Z} \oplus L$ for some other unimodular lattice $L$. 
    The only unimodular lattice in $\mathbb{R}^8$, with minimal norm $2$, is the Gosset lattice.
    So any unimodular lattice in $\mathbb{R}^8$ without vectors of norm $1$ has minimal norm at least $2$ and must be the Gosset lattice.
    The unimodular lattices in dimension $d \le 24$ with minimal norm at least $2$ are classified in \cite[chaps. 16-18]{conway_sphere_2013}.
    There are $24$ even and $156$ odd unimodular lattices in $\mathbb{R}^{24}$ with minimal norm at least $2$. 
    The only odd unimodular lattice in $\mathbb{R}^{24}$ with minimal norm at least $3$ is the \textit{odd Leech lattice} $O_{24}$, which contains vectors with minimal norm $3$.
    The only even unimodular lattice in $\mathbb{R}^{24}$ with minimal norm at least $3$ is the Leech lattice $\Lambda_{24}$, which contains vectors with minimal norm $4$. 
    This classification confirms that any unimodular lattice (whether odd or even) in $\mathbb{R}^{24}$ with minimal norm at least $4$ is the Leech lattice. 
\end{proof}

In order to understand the main result of \cite{lepowsky_e8-approach_1982} we need to describe the concept of a totally isotropic subspace of quotient $L/2L$ for $L$ an even unimodular lattice.
Let $L/ 2L$ be a $\mathbb{F}_2$-vectorspace defined on the residue classes modulo $2L$ (i.e., on the additive cosets $x+ 2L$). The corresponding inner product on $L/ 2L$ has value $0$ or $1$ in $\mathbb{F}_2$ according to whether any given representatives in $L$ have even or odd Euclidean inner product. 
A \textit{totally isotropic subspace} of $L/ 2L$ is the image, modulo $2L$, of a sublattice of $L$ which contains $2L$ and for which all inner products are even. 
Since $L$ is even unimodular, $\mathrm{dim}~L/ 2L$ is also even. A \textit{maximal totally isotropic subspace} $M/2L$ of $L/ 2L$ is a totally isotropic subspace with dimension $\mathrm{dim}~M/2L = \frac{1}{2}\mathrm{dim}~L/ 2L$. 
We are interested in the sublattice preimage $M \subset L$ of maximal totally isotropic subspace $M/2L \subset L/ 2L$. 

\begin{theorem}
    \cite{lepowsky_e8-approach_1982} Let $L$ be an even unimodular lattice with $M$ a sublattice satisfying,
    \begin{align*}
        2 L \subset M \subset L.
    \end{align*}
    The lattice $M/\sqrt{2}$ is an even unimodular lattice if and only if $M/2L$ is a maximal totally isotropic subspace of $L/2L$.
    \label{lepowskytheorem}
\end{theorem}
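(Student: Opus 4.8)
The plan is to recast the statement in terms of the Euclidean structure on $M/\sqrt2$ and its dual lattice, so that ``integral'', ``determinant $1$'', and ``even'' each become a transparent condition on $M/2L$. Write $n=\dim L$ (a multiple of $8$), let $(x,y)=\tfrac12\langle x,y\rangle$ be the ordinary Euclidean inner product, and write $N^{*}$ for the dual of a lattice $N$. Since $L$ is unimodular we have $L=L^{*}$, so dualizing $2L\subseteq M\subseteq L$ gives $L\subseteq M^{*}\subseteq\tfrac12 L$; moreover the dual of the rescaled lattice $M/\sqrt2$ is $\sqrt2\,M^{*}$, so $M/\sqrt2$ is self-dual (hence unimodular) exactly when $M=2M^{*}$. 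I would dispose of the unimodular half using this, and treat evenness separately, since that is the only step with genuine content.

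For the unimodular half I would argue in two pieces. The inner product on $M/\sqrt2$ sends $x/\sqrt2,\,y/\sqrt2$ to $\tfrac12(x,y)$, so $M/\sqrt2$ is \emph{integral} iff $(x,y)\in 2\mathbb Z$ for all $x,y\in M$, which is precisely the statement that $M/2L$ is totally isotropic for the form on $L/2L$. For the determinant, the chain $2L\subseteq M\subseteq L$ with $[L:2L]=2^{n}$ gives $[L:M]=2^{\,n-\dim(M/2L)}$, hence $\det(M)=[L:M]^{2}\det(L)=2^{2(n-\dim(M/2L))}$ and $\det(M/\sqrt2)=\det(M)/2^{n}=2^{\,n-2\dim(M/2L)}$. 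Thus, granted integrality, $M/\sqrt2$ has determinant $1$ iff $\dim(M/2L)=n/2$, i.e. iff $M/2L$ is \emph{maximal}. So $M/\sqrt2$ is unimodular iff $M/2L$ is a maximal totally isotropic subspace of $L/2L$.

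The evenness half is where the quadratic refinement of the form enters. Because $L$ is even, $\tfrac12(x,x)\in\mathbb Z$ for $x\in L$, and replacing $x$ by $x+2v$ changes this integer by $2(x,v)+2(v,v)$, so $\tilde q(x+2L):=\tfrac12(x,x)\bmod 2$ is a well-defined $\mathbb F_{2}$-valued quadratic form on $L/2L$ whose polar form is exactly the bilinear form appearing in the theorem. Now a vector $x/\sqrt2$ of $M/\sqrt2$ has norm $\tfrac12(x/\sqrt2,x/\sqrt2)=\tfrac14(x,x)$, so $M/\sqrt2$ is \emph{even} iff $(x,x)\in 4\mathbb Z$ for all $x\in M$, i.e. iff $\tilde q$ vanishes on $M/2L$. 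Assembling the equivalences, $M/\sqrt2$ is even unimodular iff $M/2L$ is an $n/2$-dimensional subspace on which $\tilde q$ vanishes; and since $\tilde q$ vanishing forces its polar form to vanish, and since for an even unimodular $L$ the refinement $\tilde q$ is of plus type (so such subspaces exist and have exactly dimension $n/2$), this is the assertion that $M/2L$ is a maximal totally isotropic subspace, with ``isotropic'' read relative to $\tilde q$. Both directions of the theorem are then readings of this chain: from the left, even unimodular forces integrality, determinant $1$, and norms in $4\mathbb Z$; from the right, maximality gives determinant $1$ and the isotropy gives integrality and evenness.

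The main obstacle, accordingly, is the evenness rather than the unimodularity, and it is also where the precise reading of ``totally isotropic'' matters. I would set up $\tilde q$ explicitly at the outset and state that ``totally isotropic subspace of $L/2L$'' is meant with respect to this quadratic refinement --- equivalently, that every vector of $M$ has norm divisible by $4$ --- because the bare bilinear condition yields an integral self-dual $M/\sqrt2$ that need not be even (in $L=\mathtt{E}_{8}$, a Lagrangian for the bilinear form through the class of a root gives $\mathbb Z^{8}$ rather than $\mathtt{E}_{8}$). With that convention fixed, everything else is the index and determinant bookkeeping above together with the standard fact that the refinement $\tilde q$ of an even unimodular lattice is hyperbolic.
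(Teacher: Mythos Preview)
The paper does not prove this theorem itself; it is quoted from Lepowsky--Meurman with only the one-line gloss that total isotropy ensures $M/\sqrt2$ is even and maximality ensures it is unimodular. Your argument is therefore a genuine proof where the paper offers none, and your three-way decomposition---bilinear isotropy $\Leftrightarrow$ integrality of $M/\sqrt2$, maximality $\Leftrightarrow$ determinant~$1$, vanishing of the quadratic refinement $\tilde q$ $\Leftrightarrow$ evenness---together with the index and determinant bookkeeping is correct.

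Your insistence on the quadratic refinement is also sharper than the paper's surrounding text. The paper defines ``totally isotropic subspace of $L/2L$'' via the bilinear pairing $\langle x,y\rangle\bmod 2$ alone; read literally, a maximal such subspace of $\mathtt E_8/2\mathtt E_8$ passing through a root class would yield $M/\sqrt2\cong\mathbb Z^{8}$, exactly the counterexample you give. In its actual applications the paper quietly does the right thing---when enumerating the $270$ sublattices of $\mathsf O$ it first restricts to the $135$ classes with $N(x)\equiv 0\pmod 2$ and only then imposes the bilinear condition---so the discrepancy never bites. But your formulation makes explicit the role of $\tilde q$ that the paper's statement and gloss leave implicit, and your proof stands on its own.
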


As described in \cite{lepowsky_e8-approach_1982}, the requirement that $M/2L$ is totally isotropic ensures that $M/\sqrt{2}$ is an even lattice and the condition that $M/2L$ is \textit{maximal} as a totally isotropic subspace ensures that $M/\sqrt{2}$ is also unimodular. 
We verify the maximal condition by simply checking that $\mathrm{dim}~M/2L = \frac{1}{2} \mathrm{dim}~L/2L$. 

We will now adapt this theorem to the case where $L = \mathsf{O}^n$. To do so, we extend the octonion norm $N$ to octonion vectors $x$ in $\mathsf{O}^n$ such that $N(x) = x x^\dagger$. 
The double Euclidean inner product on octonion vectors is, 
\begin{align*}
    \langle x, y\rangle = N(x+y) - N(x) - N(y) = x y^\dagger + y x^\dagger. 
\end{align*}
For any pair of residue classes $x+2\mathsf{O}^n$, $y+2\mathsf{O}^n$ in the $\mathbb{F}_2$-vectorspace $\mathsf{O}^n/2\mathsf{O}^n$, we define the inner product as $\langle x,y\rangle \mod 2$.
The following theorem adapts Theorem \ref{lepowskytheorem} so that it identifies even unimodular sublattices of $\mathsf{O}^n$ with respect to $\frac{1}{2}\langle x,y\rangle$:
\begin{theorem}
    Let $\mathsf{N}$ be a sublattice of $\mathsf{O}^n$ that satisfies,
    \begin{align*}
        2 \mathsf{O}^n \subset \mathsf{N} \subset \mathsf{O}^n.
    \end{align*}
    The lattice $\mathsf{N}$ is an even unimodular lattice with respect to Euclidean inner product $\frac{1}{2}\langle x,y\rangle$ if and only if $\mathsf{N}/2\mathsf{O}^n$ is a maximal totally isotropic subspace of $\mathsf{O}^n /2\mathsf{O}^n$ with respect to inner product $\langle x,y\rangle \mod 2$.
    \label{evenunimodularsublatticesofOn}
\end{theorem}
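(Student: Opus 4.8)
The plan is to deduce this directly from Theorem~\ref{lepowskytheorem} by choosing the right ambient even unimodular lattice, namely $\mathsf{O}^n$ equipped with the \emph{double} inner product $\langle\cdot,\cdot\rangle$ rather than the standard one $\tfrac{1}{2}\langle\cdot,\cdot\rangle$. The point is that Theorem~\ref{lepowskytheorem} speaks of a sublattice $M$ with $M/\sqrt{2}$ even unimodular, and the $\sqrt{2}$ rescaling is exactly what converts a statement about the double form into one about the standard form.

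First I would check that $(\mathsf{O}^n,\langle\cdot,\cdot\rangle)$ is even unimodular. Component-wise, $\langle x,y\rangle = xy^\dagger + yx^\dagger = \sum_{k=1}^{n}\langle x_k,y_k\rangle$, so this lattice is the orthogonal direct sum of $n$ copies of $(\mathsf{O},\langle\cdot,\cdot\rangle)$; the excerpt already records that $(\mathsf{O},\langle\cdot,\cdot\rangle)$ has the geometry of an $\mathtt{E}_8$ lattice, which is even unimodular, and an orthogonal direct sum of even unimodular lattices is even unimodular, so $(\mathsf{O}^n,\langle\cdot,\cdot\rangle)$ is even unimodular of rank $8n$. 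Put $L=(\mathsf{O}^n,\langle\cdot,\cdot\rangle)$; then $2L = 2\mathsf{O}^n$ and $L/2L = \mathsf{O}^n/2\mathsf{O}^n$ as $\mathbb{F}_2$-spaces, and the $\mathbb{F}_2$-valued form induced on $L/2L$ by reducing the lattice inner product mod $2$ is exactly $\langle x,y\rangle \bmod 2$, matching the convention used to state the theorem. So the hypotheses of Theorem~\ref{lepowskytheorem} apply verbatim to any $\mathsf{N}$ with $2L \subset \mathsf{N} \subset L$.

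Next I would invoke Theorem~\ref{lepowskytheorem} for this $L$: it says $\mathsf{N}/\sqrt{2}$ is even unimodular with respect to $\langle\cdot,\cdot\rangle$ if and only if $\mathsf{N}/2\mathsf{O}^n$ is a maximal totally isotropic subspace of $\mathsf{O}^n/2\mathsf{O}^n$. It then remains to translate the left-hand side into a statement about $\mathsf{N}$ under the standard form $\tfrac{1}{2}\langle\cdot,\cdot\rangle$. The map $x \mapsto x/\sqrt{2}$ is a linear bijection $\mathsf{N}\to\mathsf{N}/\sqrt{2}$ with $\langle x/\sqrt{2},\,y/\sqrt{2}\rangle = \tfrac{1}{2}\langle x,y\rangle$, so for any choice of $\mathbb{Z}$-basis the Gram matrix of $\mathsf{N}/\sqrt{2}$ under $\langle\cdot,\cdot\rangle$ equals the Gram matrix of $\mathsf{N}$ under $\tfrac{1}{2}\langle\cdot,\cdot\rangle$. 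Same entries, same determinant, same diagonal parities: hence $\mathsf{N}/\sqrt{2}$ is integral (resp.\ even, resp.\ unimodular) for $\langle\cdot,\cdot\rangle$ precisely when $\mathsf{N}$ is integral (resp.\ even, resp.\ unimodular) for $\tfrac{1}{2}\langle\cdot,\cdot\rangle$. Combining this with the previous step yields the claimed equivalence.

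The only step requiring real care—and where I would slow down—is the bookkeeping of the two inner products: one must keep straight that the result of \cite{lepowsky_e8-approach_1982} is applied with $\langle\cdot,\cdot\rangle$ in the role of the even unimodular lattice's intrinsic form, whereas the conclusion here is phrased for $\tfrac{1}{2}\langle\cdot,\cdot\rangle$, the rescaling by $\sqrt{2}$ being exactly what reconciles them. One should also note that ``totally isotropic'' in Theorem~\ref{lepowskytheorem}, spelled out as the image of a sublattice containing $2L$ all of whose inner products are even, is literally the condition imposed on $\mathsf{N}/2\mathsf{O}^n$ here, and that maximality is the dimension equality $\dim \mathsf{N}/2\mathsf{O}^n = \tfrac{1}{2}\dim \mathsf{O}^n/2\mathsf{O}^n = 4n$; both are immediate once the identification $L=(\mathsf{O}^n,\langle\cdot,\cdot\rangle)$ is fixed, and no nontrivial computation is needed beyond this.
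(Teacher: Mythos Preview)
Your proposal is correct and follows essentially the same approach as the paper: identify $(\mathsf{O}^n,\langle\cdot,\cdot\rangle)$ as an $\mathtt{E}_8^n$ (hence even unimodular) lattice, apply Theorem~\ref{lepowskytheorem} directly, and then observe that ``$\mathsf{N}/\sqrt{2}$ even unimodular for $\langle\cdot,\cdot\rangle$'' is the same as ``$\mathsf{N}$ even unimodular for $\tfrac{1}{2}\langle\cdot,\cdot\rangle$.'' Your write-up is simply more explicit about the Gram-matrix bookkeeping and the dimension count $4n$ for maximality, but the argument is the same.
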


\begin{proof}
    With respect to inner product $\langle x,y\rangle$, the lattice $\mathsf{O}^n$ is an $\mathtt{E}_8^n$ lattice. 
    It is therefore an even unimodular lattice relative to $\langle x,y\rangle$ and we can apply Theorem \ref{lepowskytheorem} to the sublattice $\mathsf{N}$. 
    According to that theorem, $\mathsf{N}/\sqrt{2}$ is an even unimodular lattice with respect to $\langle x,y \rangle$ if and only if $\mathsf{N}/2\mathsf{O}^n$ is a maximal totally isotropic subspace of $\mathsf{O}^n /2\mathsf{O}^n$.
    The condition that $\mathsf{N}/\sqrt{2}$ is an even unimodular lattice with respect to $\langle x,y \rangle$ is equivalent to the condition that $\mathsf{N}$ is an even unimodular lattice with respect to $\frac{1}{2}\langle x,y\rangle$. 
    The condition that $\mathsf{N}/2\mathsf{O}^n$ is a totally isotropic subspace is equivalent to the condition that for any $x,y$ in $\mathsf{N}$, inner product $\langle x,y\rangle$ is an even integer. 
    Therefore, this theorem is a special case of Theorem \ref{lepowskytheorem}.
\end{proof}

\begin{example}
    \label{e8sublattices}
    Let $n=1$. Then $\mathsf{O}/2\mathsf{O}$ is an $8$-dimensional $\mathbb{F}_2$-vectorspace. 
    This vectorspace contains a zero vector, $120$ norm $1$ vectors and $135$ non-zero isotropic vectors (with $N(x) = 0$). 
    A totally isotropic subspace is spanned by isotropic vectors satisfying $\langle x,y\rangle \mod 2 =0$ for any $x \ne y$ in the subspace.
    If we construct a graph on the $135$ isotropic vectors, assigning an edge when $\langle x,y\rangle \mod 2 =0$, we obtain a strongly regular graph $\mathrm{srg}( 135, 70, 37, 35 )$. 
    This graph has $270$ maximal cliques, each corresponding to the non-zero vectors of a four-dimensional totally isotropic subspace of $\mathsf{O}/2\mathsf{O}$. 
    Since these totally isotropic subspaces have dimension equal to half the dimension of $\mathsf{O}/2\mathsf{O}$, they are maximal totally isotropic subspaces. These subspaces also form a single orbit. Each clique defines a maximal totally isotropic subspace $\mathsf{N}/2\mathsf{O}$, and the corresponding pre-image $\mathsf{N} \subset\mathsf{O}$ is an even unimodular lattice with respect to Euclidean inner product $\frac{1}{2}\langle x,y\rangle$. Each clique has sixteen vectors, forming a finite $\mathbb{F}_2$-vectorspace of dimension $4$. 
    Since even unimodular lattice $\mathsf{N}$ has minimal norm $2$, it must be the $\mathtt{E}_8$ lattice. Therefore, there are precisely $270$ $\mathtt{E}_8$ sublattices of $\mathsf{O}$.
\end{example}

\begin{remark}
    The fact that $\mathtt{E}_8$ contains $270$ sublattices isometric to $\sqrt{2}\mathtt{E}_8$ is discussed in a mathematics blog post \cite{baez_integral_2014-1}. 
    The use of $\mathsf{O}/2\mathsf{O}$ structure and Theorem \ref{evenunimodularsublatticesofOn} makes it possible to determine this fact by examining the properties of a strongly regular graph on $135$ points, the isotropic vectors of $\mathsf{O}/2\mathsf{O}$. 
\end{remark}

We now describe some further properties of the $\mathtt{E}_8$ sublattices  of $\mathsf{O}$. In the following lemmas, let $s,s'$ denote norm $2$ elements in $\mathsf{O}$ and 
let $\lambda$ denote any zero of $x^2 + x + 2$ in $\mathsf{O}$, i.e. $\mathrm{Re}(\lambda) = -\frac{1}{2}$ and $N(\lambda) = 2$.

\begin{lemma}
    \label{OssOlemma}
    Two $\mathtt{E}_8$ sublattices of $\mathsf{O}$ of the form $\mathsf{O}s$, $\mathsf{O}s'$ are equal if and only if $s \equiv s' \mod 2\mathsf{O}$.
    The same is true of sublattices $s\mathsf{O}$ and $s'\mathsf{O}$.
\end{lemma}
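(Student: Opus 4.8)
The plan is to reduce the statement to a fact about the finite octonion algebra $A=\mathsf{O}/2\mathsf{O}$ over $\mathbb{F}_2$: that the principal left ideal generated by the class of a norm-$2$ element already recovers that class.

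First I would record the identity that sets up the quotient picture. For any $s$ with $N(s)=2$ the alternative and flexible laws give $(u\overline{s})s=u(\overline{s}s)=N(s)\,u=2u$ for every $u\in\mathsf{O}$, so $2u=(u\overline{s})s\in\mathsf{O}s$ and hence $2\mathsf{O}\subseteq\mathsf{O}s$. Since right multiplication $R_s$ scales the quadratic form $N$ by $N(s)$, we get $[\mathsf{O}:\mathsf{O}s]=|\det R_s|=N(s)^{4}=16$, so $\mathsf{O}s/2\mathsf{O}$ is a $4$-dimensional $\mathbb{F}_2$-subspace of $A$; and because reduction modulo $2\mathsf{O}$ is a ring homomorphism, it is exactly the principal left ideal $A[s]$, where $[s]=s+2\mathsf{O}$ is nonzero (since $N(s)=2$ is below the minimum norm $4$ of $2\mathsf{O}$) and isotropic (since $N([s])=[N(s)]=[2]=0$).

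Given this, $s\equiv s'\pmod{2\mathsf{O}}\Rightarrow\mathsf{O}s=\mathsf{O}s'$ is immediate: $[s]=[s']$ gives $\mathsf{O}s+2\mathsf{O}=\mathsf{O}s'+2\mathsf{O}$, and $2\mathsf{O}$ lies in both $\mathsf{O}s$ and $\mathsf{O}s'$. For the converse I would prove the following claim in $A$: for every nonzero isotropic $w\in A$, $\{z\in A:(Aw)z=0\}=\mathbb{F}_2\,\overline{w}$. Granting it, $\mathsf{O}s=\mathsf{O}s'$ gives $A[s]=\mathsf{O}s/2\mathsf{O}=\mathsf{O}s'/2\mathsf{O}=A[s']$, hence $\mathbb{F}_2\,\overline{[s]}=\{z:(A[s])z=0\}=\{z:(A[s'])z=0\}=\mathbb{F}_2\,\overline{[s']}$; since $\overline{[s]},\overline{[s']}\ne 0$ this forces $\overline{[s]}=\overline{[s']}$, so $[s]=[s']$ and $s\equiv s'\pmod{2\mathsf{O}}$. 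The right-ideal statement follows by conjugation: $\overline{s\mathsf{O}}=\mathsf{O}\,\overline{s}$, so $s\mathsf{O}=s'\mathsf{O}$ iff $\mathsf{O}\,\overline{s}=\mathsf{O}\,\overline{s'}$ iff (by the case just proved, applied to the norm-$2$ elements $\overline{s},\overline{s'}$) $\overline{s}\equiv\overline{s'}$ iff $s\equiv s'$.

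It remains to prove the claim, which is the crux. The inclusion $\supseteq$ is uniform: $(xw)\overline{w}=x(w\overline{w})=N(w)\,x=0$ for all $x\in A$. For $\subseteq$, suppose $(xw)z=0$ for all $x$; expanding $(xw)z=x(wz)+[x,w,z]$ and setting $x=1$ gives $wz=0$, so the hypothesis reduces to $[x,w,z]=0$ for all $x$. Here I would use the structure of the octonion associator: in the octonion algebra $A$ the alternating trilinear map $[\,\cdot\,,w,z]$ vanishes identically in its first argument precisely when $1,w,z$ are linearly dependent, so $z\in\mathbb{F}_2\,1+\mathbb{F}_2\,w$. Testing the at most four elements $0,1,w,1+w$ of this span against $wz=0$, using $N(w)=0$ and the characteristic equation $w^{2}=t(w)\,w$ (with $t(w)\in\mathbb{F}_2$ the trace and $\overline{w}=t(w)\cdot 1-w$), exactly $z=0$ and $z=\overline{w}$ survive — namely $z=w$ when $t(w)=0$ and $z=1+w$ when $t(w)=1$ — giving $\{z:(Aw)z=0\}=\mathbb{F}_2\,\overline{w}$. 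The one nontrivial input is the associator fact, which I expect to be the main obstacle; if one prefers not to argue it in general, the claim can instead be checked by a finite computation in $A$ over the handful of $\mathrm{Aut}(A)\cong G_2(2)$-orbits of isotropic vectors (e.g.\ in the Zorn vector-matrix model, where each such check is a short calculation for a rank-one idempotent and for the square-zero classes).
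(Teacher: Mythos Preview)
Your argument is correct and shares the paper's basic strategy of passing to the quotient ring $A=\mathsf{O}/2\mathsf{O}$ and identifying $\mathsf{O}s/2\mathsf{O}$ with the principal left ideal $A[s]$. Where you differ is in rigor: the paper's proof simply asserts that ``since mapping modulo $2\mathsf{O}$ is a ring homomorphism, $\mathsf{O}s$ and $\mathsf{O}s'$ have the same image if and only if $s\equiv s'$,'' leaving the nontrivial direction $A[s]=A[s']\Rightarrow [s]=[s']$ unjustified. Your right-annihilator characterization $\{z:(Aw)z=0\}=\mathbb{F}_2\overline{w}$ genuinely fills that gap by recovering $[s]$ intrinsically from the ideal, and your case check on the four elements of $\mathbb{F}_2+\mathbb{F}_2 w$ is clean. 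The associator step you flag---that $[x,w,z]=0$ for all $x$ forces $1,w,z$ to be linearly dependent---does hold in any octonion algebra (including the split one over $\mathbb{F}_2$); it amounts to saying the ``relative nucleus'' of any $w\notin\mathbb{F}_2$ is exactly $\mathbb{F}_2+\mathbb{F}_2 w$. Your proposed fallback of verifying the annihilator claim directly on $G_2(2)$-orbit representatives is also sound: there are just two orbits of nonzero isotropic vectors, of sizes $63$ (trace~$0$) and $72$ (trace~$1$), and a short Zorn-matrix computation handles each. So your proof is essentially a completion of the paper's sketch rather than a different route.
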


\begin{proof}
    Left or right octonion multiplication is a conformal mapping, meaning that the $\langle sx,sy\rangle = N(s)\langle x,y\rangle = \langle x s,y s\rangle$ \cite[p. 5]{springer_octonions_2000}.
    Since $N(s) = 2$, this ensures that $\mathsf{O}s$ and $s\mathsf{O}$ are both $\mathtt{E}_8$ sublattices of $\mathsf{O}$, relative to inner product $\frac{1}{2}\langle x,y\rangle$. 
    Two $\mathtt{E}_8$ sublattices of $\mathsf{O}$ are equivalent subsets of $\mathsf{O}$ if and only if they have the same image modulo $2\mathsf{O}$. 
    But since mapping modulo $2\mathsf{O}$ is a ring homomorphism, $\mathsf{O}s$ and $\mathsf{O}s'$ have the same image if and only if $s \equiv s' \mod 2\mathsf{O}$. The same argument applies for $s\mathsf{O}$ and $s'\mathsf{O}$.
\end{proof}

\begin{remark}
    Lemma \ref{OssOlemma} ensures that each frame in $\mathsf{O}$ defines a pair of $\mathtt{E}_8$ sublattices of the form $\mathsf{O}s$ and $s\mathsf{O}$ for $s$ any norm $2$ frame representative. 
\end{remark}

\begin{lemma}
    Each $\mathtt{E}_8$ sublattice of $\mathsf{O}$ has the form of either $\mathsf{O}s$ or $s \mathsf{O}$.
\end{lemma}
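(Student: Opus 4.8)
The plan is to prove the statement by counting. I will exhibit exactly $135$ sublattices of the form $\mathsf{O}s$ and exactly $135$ of the form $s\mathsf{O}$, show that these two families are disjoint, and then use the fact from Example~\ref{e8sublattices} that $\mathsf{O}$ contains precisely $270$ sublattices isometric to $\mathtt{E}_8$. Since $135+135=270$, disjointness will force every $\mathtt{E}_8$ sublattice onto one of the two lists.

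The first step is routine. By Lemma~\ref{OssOlemma}, a norm-$2$ residue class (a frame) determines the $\mathtt{E}_8$ sublattice $\mathsf{O}s$, for $s$ any norm-$2$ representative, and distinct frames give distinct sublattices; the same holds for $s\mathsf{O}$. Since $\mathsf{O}$ has $2160$ elements of norm $2$ and each frame consists of $16$ of them, there are $2160/16 = 135$ frames, hence exactly $135$ sublattices of the form $\mathsf{O}s$ and exactly $135$ of the form $s\mathsf{O}$.

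To compare the two families I would pass to $\mathsf{O}/2\mathsf{O}$. First, $2\mathsf{O}\subseteq\mathsf{O}s$: for $x\in\mathsf{O}$ one has $(x\overline{s})s = x(\overline{s}s) = N(s)\,x = 2x$, the reassociation being legitimate because $\overline{s},s$ lie in the commutative subalgebra $\mathbb{R}(s)$, so that $x,\overline{s},s$ generate an associative subalgebra; symmetrically $2\mathsf{O}\subseteq s\mathsf{O}$. Hence a sublattice of either form is recovered from its image under reduction modulo $2\mathsf{O}$. Since that reduction is a ring homomorphism onto the split octonion $\mathbb{F}_2$-algebra $\mathsf{O}/2\mathsf{O}$, the image of $\mathsf{O}s$ is $\{\,y\overline{s} : y\in\mathsf{O}/2\mathsf{O}\,\}$ and the image of $t\mathsf{O}$ is $\{\,\overline{t}\,y : y\in\mathsf{O}/2\mathsf{O}\,\}$; by Theorem~\ref{evenunimodularsublatticesofOn} (applied with $n=1$) both are $4$-dimensional maximal totally isotropic subspaces of $\mathsf{O}/2\mathsf{O}$. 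Thus $\mathsf{O}s = t\mathsf{O}$ is equivalent to the equality of these two subspaces.

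The crux, which I expect to be the main obstacle, is to rule this out. The maximal totally isotropic $4$-subspaces of the norm form on the $8$-dimensional $\mathbb{F}_2$-space $\mathsf{O}/2\mathsf{O}$ split into two families (the two rulings of the associated $O_8^+(2)$ geometry), and two such subspaces lie in the same family exactly when their intersection has even dimension. I would show that a subspace of the first type, $\{y\overline{s}\}$, and one of the second type, $\{\overline{t}\,y\}$, always meet in odd dimension (either $1$ or $3$), so that they can never coincide; this is the geometric shadow of triality for the split octonion algebra — right- and left-multiplication subspaces of rank-one elements occupy opposite rulings — and it can alternatively be verified by a direct finite computation over $\mathbb{F}_2$ on the $135$ isotropic vectors. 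Granting this, the $135$ sublattices $\mathsf{O}s$ and the $135$ sublattices $s\mathsf{O}$ are $270$ distinct $\mathtt{E}_8$ sublattices of $\mathsf{O}$, which by Example~\ref{e8sublattices} accounts for all of them.
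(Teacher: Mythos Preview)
Your proof is correct and follows the same counting strategy as the paper: produce $135$ sublattices of each type, show the two families are disjoint by passing to images in $\mathsf{O}/2\mathsf{O}$, and invoke Example~\ref{e8sublattices} to conclude. The paper's own proof is terser---it simply asserts that ``by taking the images modulo $2\mathsf{O}$ we can verify that there are no repetitions,'' leaving the disjointness of the two families to a direct finite check. Your version supplies a structural explanation for that step: the $270$ maximal totally isotropic $4$-spaces of the $O_8^+(2)$ form on $\mathsf{O}/2\mathsf{O}$ fall into two rulings characterised by the parity of pairwise intersections, and the right-multiplication subspaces $(\mathsf{O}/2\mathsf{O})\overline{s}$ populate one ruling while the left-multiplication subspaces $\overline{t}(\mathsf{O}/2\mathsf{O})$ populate the other, so no coincidence $\mathsf{O}s = t\mathsf{O}$ is possible. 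This is a nice conceptual upgrade (and anticipates the content of Lemma~\ref{nonintersecting}, which records that $\mathsf{O}s\cap s'\mathsf{O}\ne 2\mathsf{O}$), though for the purposes of the lemma the paper's bare computational verification suffices.
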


\begin{proof}
    We can construct $135$ lattices of the form $\mathsf{O}s$ and $135$ more lattices of the form $s \mathsf{O}$, yielding a total of $270$ $\mathtt{E}_8$ sublattices of $\mathsf{O}$. By taking the images modulo $2\mathsf{O}$ we can verify that there are no repetitions, so that the preimages are $135+135$ distinct sublattices of $\mathsf{O}$.
    By Example \ref{e8sublattices} there are only $270$ $\mathtt{E}_8$ sublattices of $\mathsf{O}$ so we have found them all. 
\end{proof}

\begin{lemma}
    \label{nonintersecting}
    Let $s \not\equiv s' \mod 2\mathsf{O}$.
    Then $\mathsf{O}s \cap s'\mathsf{O} \ne 2\mathsf{O}$. Also, $\mathsf{O}s \cap \mathsf{O}s' = 2\mathsf{O}$ if and only if $N(s+s')$ is odd. 
\end{lemma}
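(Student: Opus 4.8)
The plan is to move everything into the finite $\mathbb{F}_2$-space $\mathsf{O}/2\mathsf{O}$, whose structure was supplied by Theorem~\ref{evenunimodularsublatticesofOn} and Example~\ref{e8sublattices}. Write $\pi$ for reduction modulo $2\mathsf{O}$, and for a norm-$2$ element $r$ put $V(r)=\pi(\mathsf{O}r)$ and $W(r)=\pi(r\mathsf{O})$ — the $4$-dimensional (``left'' resp.\ ``right'') maximal totally isotropic subspaces attached to the frame of $r$. Two routine preliminaries are needed. First, $2\mathsf{O}\subseteq\mathsf{O}r$ and $2\mathsf{O}\subseteq r\mathsf{O}$ always, since $(x\bar r)r=x(\bar r r)=N(r)x=2x$ and symmetrically $r(\bar r x)=2x$; hence $\mathsf{O}s\cap s'\mathsf{O}$ and $\mathsf{O}s\cap\mathsf{O}s'$ both contain $2\mathsf{O}=\ker\pi$, so each equals $2\mathsf{O}$ iff its $\pi$-image is $0$, and those images are $V(s)\cap W(s')$ and $V(s)\cap V(s')$ respectively. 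Second, the characteristic equation gives $\pi(r)^2=\bigl(2\mathrm{Re}(r)\bmod 2\bigr)\pi(r)-\bigl(N(r)\bmod 2\bigr)$, so for $N(r)=2$ one has $\pi(r)^2\in\{0,\pi(r)\}$; consequently right multiplication $R_{\pi(r)}$ on $\mathsf{O}/2\mathsf{O}$ squares to $0$ or to itself, its kernel is $4$-dimensional, and $\{x\in\mathsf{O}:xr\in 2\mathsf{O}\}=\mathsf{O}\bar r$ maps onto that kernel.

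For $\mathsf{O}s\cap s'\mathsf{O}\ne 2\mathsf{O}$: the element $s's$ lies in $\mathsf{O}s\cap s'\mathsf{O}$ (it is $(s')\,s$, and also $s'\,(s)$), so it settles the matter unless $s's\in 2\mathsf{O}$, i.e.\ $\pi(s')\pi(s)=0$. If $\pi(s)^2=0$, then $R_{\pi(s)}^2=0$ forces $\ker R_{\pi(s)}$ to coincide with its image $V(s)$, so $\pi(s')\in V(s)$, hence $s'\in\mathsf{O}s$, and the norm-$2$ vector $s'$ itself is a common element. If $\pi(s)^2=\pi(s)$, one instead has $\pi(s')\in\ker R_{\pi(s)}=\pi(\mathsf{O}\bar s)$, and a short computation in Zorn vector-matrix coordinates — with $\pi(s)$ normalised to a diagonal idempotent — shows $V(s)\cap W(s')$ is still nonzero. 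Conceptually all of this is the statement that a left-type and a right-type maximal isotropic subspace of $\mathrm{O}_8^+(2)$ always meet.

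For the ``Also'' part, note $N(s+s')=N(s)+N(s')+\langle s,s'\rangle=4+\langle s,s'\rangle$, so $N(s+s')$ is odd exactly when $\langle s,s'\rangle$ is odd, i.e.\ exactly when $\pi(s)$ and $\pi(s')$ are non-orthogonal in $\mathsf{O}/2\mathsf{O}$. It then remains to prove that $V(s)\cap V(s')=0$ iff $\langle s,s'\rangle$ is odd. Both subspaces lie in the left family, so $\dim\bigl(V(s)\cap V(s')\bigr)\in\{0,2,4\}$, equal to $4$ exactly when $s\equiv s'\pmod{2\mathsf{O}}$ by Lemma~\ref{OssOlemma}; the task is to separate $0$ from $2$ by the parity of $\langle s,s'\rangle$. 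I would argue by octonion triality: $\pi(r)\mapsto V(r)$ is an $\mathrm{Aut}(\mathsf{O})$-equivariant bijection from nonzero isotropic vectors to left-type maximal isotropic subspaces, and since $\mathrm{Aut}(\mathsf{O})\cong G_2(2)$ is the triality-fixed subgroup of $\mathrm{O}_8^+(2)$ this is a triality correspondence, under which ``$V(s)\cap V(s')=0$'' matches ``$\langle\pi(s),\pi(s')\rangle=1$''. Alternatively, self-containedly: by $G_2(2)$-transitivity one may normalise $\pi(s)$ to one of two standard isotropic elements of the split octonion algebra $\mathsf{O}/2\mathsf{O}$ (according to the sign in $\pi(s)^2\in\{0,\pi(s)\}$) and then compute $V(s)\cap V(s')$ and $\langle s,s'\rangle$ directly in Zorn coordinates as functions of $\pi(s')$, finding that the intersection collapses to $0$ precisely when $\langle s,s'\rangle$ is odd.

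The main obstacle is exactly this last equivalence. The reduction to $\mathsf{O}/2\mathsf{O}$, the identities $(x\bar r)r=2x$ and $\pi(r)^2\in\{0,\pi(r)\}$, the first assertion (once the two-family structure of $\mathrm{O}_8^+(2)$ is granted), and the arithmetic $N(s+s')=4+\langle s,s'\rangle$ are all formal. But recognising when two left-type maximal isotropic subspaces $V(s)$, $V(s')$ are transverse is the triality statement for $\mathrm{O}_8^+(2)$, and there seems to be no way around either invoking triality/spinor geometry or carrying out the explicit (if not especially short) coordinate computation.
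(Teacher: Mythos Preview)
Your proposal is correct but takes a genuinely different route from the paper. The paper's own proof is essentially one line: having observed (as you do) that $2\mathsf{O}\subset\mathsf{O}s$ and $2\mathsf{O}\subset s\mathsf{O}$, so that both claims reduce to statements about images in the $256$-element ring $\mathsf{O}/2\mathsf{O}$, it simply declares that these ``can be quickly verified by computation'' there. No structural argument is given.

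Your route instead interprets $V(s)=\pi(\mathsf{O}s)$ and $W(s)=\pi(s\mathsf{O})$ as the two families of maximal totally isotropic $4$-spaces on the $\mathrm{O}_8^+(2)$ quadric. The first assertion then becomes the classical parity fact that opposite-family maximal isotropics meet in odd dimension (hence never in~$0$), and the second becomes the triality statement that two same-family $4$-spaces are transverse exactly when the corresponding isotropic points are non-orthogonal. This buys genuine explanatory content: one sees \emph{why} the lemma holds, and the identification of $\pi(r)\mapsto V(r)$ with the octonionic realisation of triality is exactly the right picture. The cost is that you must either import the $D_4$ incidence geometry or, as you acknowledge, fall back on a Zorn-coordinate computation for the last equivalence, at which point the effort is comparable to the paper's brute-force check. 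Your explicit witnesses $s's$ and (in the nilpotent case) $s'$ itself for the first part are a nice bonus that the paper does not supply. In short: the paper verifies, you explain; both ultimately rest on a finite check, but yours localises where the actual content lies.
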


\begin{proof}
    We can verify this lemma efficiently by computation using the ring $\mathsf{O}/2\mathsf{O}$. 
    That is, $s \not\equiv s' \mod 2\mathsf{O}$ ensures that $\mathsf{O}s$ and $\mathsf{O}s'$ have distinct images modulo $2\mathsf{O}$ and therefore represent distinct lattices (likewise for left multiplication by $s,s'$).
    Since the images of $\mathsf{O}s$ and $s\mathsf{O}$ modulo $2\mathsf{O}$ are $\mathbb{F}_2$-vectorspaces, they contain the residue class $0+2\mathsf{O}$, which implies that $2\mathsf{O} \subset \mathsf{O}s$ and $2\mathsf{O} \subset s\mathsf{O}$.  
    The properties of the images modulo $2\mathsf{O}$ determine the properties of the intersections given above and can be quickly verified by computation.
\end{proof}

\begin{corrollary}
     Since $N(\lambda + \overline{\lambda}) = 1$ we have $\mathsf{O}\lambda \cap \mathsf{O}\overline{\lambda} = 2\mathsf{O}$.
\end{corrollary}

The following can be verified by computation using a canonical copy of $\mathsf{O}$ and is also proven in \cite{conway_quaternions_2003}. 

\begin{lemma}
    \cite[pp. 138-141]{conway_quaternions_2003} The stabilizer in $\mathrm{Aut}(\mathsf{O})$ of residue class $\lambda + 2\mathsf{O}$ is a subgroup of type $\mathrm{PSL}_2(7)$, which also stabilizes $\overline{\lambda} + 2 \mathsf{O}$.
\end{lemma}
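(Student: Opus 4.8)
The plan is to read off the order of the stabilizer from the orbit data already recorded for the norm-$2$ elements of $\mathsf{O}$, and to derive the statement about $\overline{\lambda}$ from the fact that octonion automorphisms fix the real part. First I would check that $\lambda + 2\mathsf{O}$ is a frame. Its residue class does not contain $0$, since $N(\lambda) = 2$ forces $\lambda \notin 2\mathsf{O}$, so the minimal norm is positive; and it contains no norm-$1$ element, since any $u + 2t$ with $N(u) = 1$ has norm $1 + 2\langle u, t\rangle + 4N(t)$, an odd integer, whereas $N(\lambda) = 2$. Hence $\lambda + 2\mathsf{O}$ has minimal norm exactly $2$ and carries sixteen norm-$2$ representatives. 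Since $-s \equiv s \bmod 2\mathsf{O}$ and negation sends a zero of $x^2 + x + 2$ to a zero of $x^2 - x + 2$ (and the bound $\mathrm{Re}(x)^2 \le N(x) = 2$ forces a norm-$2$ representative of this class to have real part $\pm\tfrac12$), the frame consists of eight zeros of $x^2 + x + 2$ together with their eight negatives.

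Next I would pass from elements to residue classes. A ring automorphism of $\mathsf{O}$ fixes $2\mathsf{O}$ setwise, so the map sending a zero of $x^2 + x + 2$ to its class modulo $2\mathsf{O}$ is $\mathrm{Aut}(\mathsf{O})$-equivariant and surjects onto the set of frames that contain such zeros. Since $\mathrm{Aut}(\mathsf{O})$ is transitive on the $576$ zeros of $x^2 + x + 2$, and each relevant frame carries exactly eight of them, there are $576/8 = 72$ such frames and they form a single $\mathrm{Aut}(\mathsf{O})$-orbit. Orbit--stabilizer then gives $12096/72 = 168$ for the order of the stabilizer of $\lambda + 2\mathsf{O}$.

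For the $\overline{\lambda}$ claim, every $\sigma \in \mathrm{Aut}(\mathsf{O})$ fixes $1$ and preserves $\langle\cdot,\cdot\rangle$, hence fixes $\mathrm{Re}$, so $\sigma(\overline{x}) = \sigma\bigl(2\mathrm{Re}(x) - x\bigr) = 2\mathrm{Re}(\sigma x) - \sigma x = \overline{\sigma x}$; thus $\sigma$ commutes with conjugation on $\mathsf{O}/2\mathsf{O}$. Because $\overline{\lambda + 2\mathsf{O}} = \overline{\lambda} + 2\mathsf{O}$, a class distinct from $\lambda + 2\mathsf{O}$ as already noted for the corollary preceding this lemma, any $\sigma$ stabilizing $\lambda + 2\mathsf{O}$ also stabilizes $\overline{\lambda} + 2\mathsf{O}$, and since conjugation is an involution the two stabilizers coincide.

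The remaining step, which I expect to be the main obstacle, is to identify this group of order $168$ with $\mathrm{PSL}_2(7)$: there are non-simple groups of that order, so the order alone is not enough. Matching the computational style of this section and the treatment in \cite[pp.~138--141]{conway_quaternions_2003}, the cleanest route is to realize the stabilizer explicitly in a canonical copy of $\mathsf{O}$---say as a permutation group on the $240$ units, or on the sixteen representatives of the frame---and verify with GAP that it is the unique simple group of order $168$. Alternatively, one can show the stabilizer lies in the index-two subgroup $\mathrm{PSU}_3(3) \subset \mathrm{Aut}(\mathsf{O})$ and invoke the fact that every subgroup of $\mathrm{PSU}_3(3)$ of order $168$ is a maximal copy of $\mathrm{PSL}_2(7)$, which rules out any non-simple possibility.
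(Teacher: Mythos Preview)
Your proposal is correct and in fact supplies more argument than the paper does. The paper offers no proof here: it simply cites \cite[pp.~138--141]{conway_quaternions_2003} and remarks that the statement ``can be verified by computation using a canonical copy of $\mathsf{O}$.'' Your orbit--stabilizer computation (using the paper's own enumeration that the $576$ zeros of $x^2+x+2$ form a single $\mathrm{Aut}(\mathsf{O})$-orbit, together with your observation that each such frame carries exactly eight of them) gives the order $168$ cleanly, and your conjugation argument for the $\overline{\lambda}$ claim is a genuine proof rather than a deferral. The only step you flag as needing computation---pinning down the isomorphism type as $\mathrm{PSL}_2(7)$ rather than some other group of order $168$---is exactly the step the paper (and Conway--Smith) handle by computation or by explicit structural analysis, so your proposed endgame matches theirs. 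One small caution: your alternative route (``show the stabilizer lies in $\mathrm{PSU}_3(3)$ and quote that its order-$168$ subgroups are all $\mathrm{PSL}_2(7)$'') still needs an independent reason why the stabilizer sits inside the index-two subgroup, since appealing to simplicity of $\mathrm{PSL}_2(7)$ at that stage would be circular; the GAP verification is the safer path and is what the paper's surrounding arguments rely on anyway.
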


We can now describe how to construct a Leech sublattice of octonion integer triples in $\mathsf{O}^3$, adapting the techniques described in \cite{lepowsky_e8-approach_1982} for octonion integers. 
We begin by selecting any two $\mathtt{E}_8$ sublattices of $\mathsf{O}$ with respect to $\frac{1}{2}\langle x,y\rangle$, call them $\Phi$ and $\Psi$, that satisfy the following requirements:
\begin{align*}
    \Phi + \Psi = \mathsf{O}, \quad \Phi\cap \Psi = 2\mathsf{O}.
\end{align*}
By Theorem \ref{evenunimodularsublatticesofOn}, the images of $\Phi$ and $\Psi$ modulo $2\mathsf{O}$ are both maximal totally isotropic subspaces of $\mathsf{O}/2\mathsf{O}$ that only intersect in zero, the $0+ 2\mathsf{O}$ residue class. 

\begin{definition}
    \label{dualE8leech}
    Let $\Phi$ and $\Psi$ be any two $\mathtt{E}_8$ sublattices of $\mathsf{O}$, with respect to $\frac{1}{2}\langle x,y\rangle$, that satisfy $\Phi + \Psi = \mathsf{O}$ and $\Phi\cap \Psi = 2\mathsf{O}$. 
    Let $\Lambda(\Phi, \Psi)$ be the following sublattice of $\mathsf{O}^3$:
    \begin{align*}
        \Lambda(\Phi,\Psi) = \left\lbrace (a,b,c) \subset \mathsf{O}^3 \mid a+b, b+c, a+c \in \Phi, a+b+c \in \Psi \right\rbrace
    \end{align*}
    Equivalently,
    \begin{align*}
        \Lambda(\Phi, \Psi) = \left\lbrace (x_1 + z, x_2 + z, x_3 + z) \mid  x_i \in \Phi, x_1 + x_2 + x_3, z \in \Psi \right\rbrace.
    \end{align*}
    and also,
    \begin{align*}
        \Lambda(\Phi, \Psi) = \left\lbrace (x + y + z, x + z, y + z) \mid  x,y \in \Phi, z \in \Psi \right\rbrace.
    \end{align*}
\end{definition}

\begin{remark}
    To verify that the three definitions of $\Lambda(\Phi,\Psi)$ are equivalent, it suffices to verify that the second definition is a sublattice of the first, the third a sublattice of the second, and the first a sublattice of the third. 
\end{remark}

\begin{theorem}
    The lattice $\Lambda = \Lambda(\Phi, \Psi)$ of Definition \ref{dualE8leech}, with inner product $\frac{1}{2}\langle x, y\rangle$, is a Leech lattice 
\end{theorem}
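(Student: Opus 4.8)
The plan is to identify $\Lambda$ by combining the two classification results already available: Theorem~\ref{evenunimodularsublatticesofOn}, to show that $\Lambda$ is an even unimodular lattice of rank $24$ with respect to $\tfrac12\langle x,y\rangle$, and then Lemma~\ref{latticeidentification}, to conclude that $\Lambda$ is the Leech lattice once we check that its minimal norm is at least $4$. The first preliminary is routine: the conditions defining $\Lambda$ in Definition~\ref{dualE8leech} are additive, so $\Lambda$ is a sublattice of $\mathsf{O}^3$, and if $a,b,c \in 2\mathsf{O} \subseteq \Phi\cap\Psi$ then each of $a+b$, $b+c$, $a+c$, $a+b+c$ lies in both $\Phi$ and $\Psi$; hence $2\mathsf{O}^3 \subseteq \Lambda \subseteq \mathsf{O}^3$ and $\Lambda$ has rank $24$. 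By Theorem~\ref{evenunimodularsublatticesofOn} it then suffices to show that $\Lambda/2\mathsf{O}^3$ is a maximal totally isotropic subspace of $\mathsf{O}^3/2\mathsf{O}^3$ with respect to $\langle x,y\rangle \bmod 2$.

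For the totally isotropic condition I would use the third description of $\Lambda$ in Definition~\ref{dualE8leech}, writing typical elements as $u = (x+y+z,\ x+z,\ y+z)$ and $v = (x'+y'+z',\ x'+z',\ y'+z')$ with $x,y,x',y' \in \Phi$ and $z,z' \in \Psi$. Expanding $\langle u,v\rangle = \langle x+y+z,\, x'+y'+z'\rangle + \langle x+z,\, x'+z'\rangle + \langle y+z,\, y'+z'\rangle$ bilinearly, each of the nine cross terms $\langle \alpha,\beta'\rangle$ with $\alpha \in \{x,y,z\}$, $\beta' \in \{x',y',z'\}$ occurs with even multiplicity except $\langle x,y'\rangle$, $\langle y,x'\rangle$, and $\langle z,z'\rangle$. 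The first two lie in $\langle\Phi,\Phi\rangle$ and the last in $\langle\Psi,\Psi\rangle$, and these are even integers because $\Phi$ and $\Psi$ are $\mathtt{E}_8$ lattices with respect to $\tfrac12\langle x,y\rangle$, so $\langle\cdot,\cdot\rangle$ takes even integer values on $\Phi$ and on $\Psi$. Hence $\langle u,v\rangle$ is even, and $\Lambda/2\mathsf{O}^3$ is totally isotropic.

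The same description gives the dimension: $\Lambda/2\mathsf{O}^3$ is the image of the $\mathbb{F}_2$-linear map $(\bar x,\bar y,\bar z) \mapsto (\overline{x+y+z},\, \overline{x+z},\, \overline{y+z})$ on $(\Phi/2\mathsf{O}) \times (\Phi/2\mathsf{O}) \times (\Psi/2\mathsf{O})$, and this map is injective: if its value is zero then $\bar x + \bar z = 0$ with $\bar x \in \Phi/2\mathsf{O}$ and $\bar z \in \Psi/2\mathsf{O}$, and $\Phi\cap\Psi = 2\mathsf{O}$ forces these subspaces to meet only in $0$, so $\bar x = \bar z = 0$ and then $\bar y = 0$. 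Thus $\dim_{\mathbb{F}_2}\Lambda/2\mathsf{O}^3 = 4+4+4 = 12 = \tfrac12\dim_{\mathbb{F}_2}\mathsf{O}^3/2\mathsf{O}^3$, so $\Lambda/2\mathsf{O}^3$ is maximal totally isotropic and, by Theorem~\ref{evenunimodularsublatticesofOn}, $\Lambda$ is even unimodular of rank $24$. It remains to bound the minimal norm. Since $\Lambda$ is even, $N(a)+N(b)+N(c) = \tfrac12\langle u,u\rangle$ is an even integer for every $u=(a,b,c)\in\Lambda$, so a nonzero minimal vector has norm $2$ or at least $4$; norm $2$ is impossible, because, up to the symmetry of $\Lambda$ under permuting coordinates, either one coordinate has norm $2$ and the rest vanish --- and then it lies in $\Phi\cap\Psi = 2\mathsf{O}$, whose nonzero elements have norm $\geq 4$ --- or two coordinates have norm $1$ --- and then one of them lies in $\Phi$, contradicting that $\Phi$ has minimal norm $2$. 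Hence $\Lambda$ has minimal norm at least $4$, and Lemma~\ref{latticeidentification} identifies it as the Leech lattice.

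I expect the main obstacle to be the totally isotropic verification: it only becomes transparent after committing to the third description of $\Lambda$, which is designed so that the bilinear terms of odd multiplicity are exactly those that are even for the independent reason that $\mathtt{E}_8$ is an even lattice. Choosing that description, rather than the primal form $\{(a,b,c) : a+b,\,b+c,\,a+c \in \Phi,\ a+b+c \in \Psi\}$, is the real content; the dimension count, the minimal norm estimate, and the final appeal to the classification of low-dimensional unimodular lattices are then routine.
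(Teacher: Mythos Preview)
Your proof is correct and follows essentially the same route as the paper: both apply Theorem~\ref{evenunimodularsublatticesofOn} via the third description of $\Lambda$ (the subspaces spanned by $(x,x,0)$, $(y,0,y)$, $(z,z,z)$), count dimension $12$, and then invoke Lemma~\ref{latticeidentification} after ruling out norm~$2$ vectors. Your version is in fact more complete, since you explicitly verify the totally isotropic condition, the injectivity needed for the dimension count, and the absence of norm~$2$ vectors, all of which the paper only sketches or asserts.
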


\begin{proof}
    We need to apply Theorem \ref{evenunimodularsublatticesofOn} to show that $\Lambda$ is an even unimodular lattice with respect to $\frac{1}{2}\langle x, y\rangle$. The following sketch parallels a proof given in \cite{lepowsky_e8-approach_1982}, which does not involve octonions.
    Since $\Phi$ and $\Psi$ are sublattices of $\mathsf{O}$ we know that $\Lambda \subset \mathsf{O}^3$.
    Since $2\mathsf{O}$ is in both $\Phi$ and $\Psi$, we know that $\Lambda$ contains $(2a,0,0)$ for any $a$ in $\mathsf{O}$. The same is true for the other coordinate positions. 
    Therefore we also have $2\mathsf{O}^3 \subset \Lambda$. 
    Modulo $2\mathsf{O}$, $\Lambda$ consists of three orthogonal totally isotropic subspaces relative to inner product $\langle x, y\rangle \mod 2$. Each of these subspaces has dimension $4$. These are the three subspaces spanned respectively by the vectors of the form $(x,x,0)$, $(y,0,y)$, and $(z,z,z)$. 
    Therefore $\Lambda / 2\mathsf{O}^3$ has dimension $12$, which is maximal in $\mathsf{O}^3/2 \mathsf{O}^3$ since it is half of $24$. 
    Therefore it is a maximal totally isotropic subspace of $\mathsf{O}^3/2\mathsf{O}^3$. 
    This ensures that $\Lambda$ is an even unimodular lattice relative to the standard Euclidean inner product $\frac{1}{2}\langle x,y\rangle$. 
    Finally, since $\Lambda$ lacks any norm $2$ vectors, by Theorem \ref{latticeidentification} it is a Leech lattice.
\end{proof}

We can describe a family of octonion Leech lattices with respect to certain norm $2$ octonions as follows.
\begin{definition}
    \label{munulattice}
    Let $s,s'$ in $\mathsf{O}$ be norm $2$ octonion integers  with $\mathsf{O}s \cap \mathsf{O}s' = 2 \mathsf{O}$.
    We define $\Lambda(s,s')$ to be the Leech lattice $\Lambda(\mathsf{O}s, \mathsf{O}s')$.
\end{definition}

\begin{remark}
    In what follows we restrict to describing Leech lattices of the form $\Lambda(\mathsf{O}s, \mathsf{O}s')$, rather than $\Lambda(s\mathsf{O}, s'\mathsf{O})$, since the latter lattices can easily be obtained from the former by octonion conjugation.
\end{remark}

\begin{lemma}
    The lattices $\Lambda(s,s')$ and $\Lambda(t,t')$ are equal if and only if $s \equiv t \mod 2\mathsf{O}$ and $s' \equiv t' \mod 2\mathsf{O}$.
\end{lemma}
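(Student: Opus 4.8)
The plan is to reduce everything to \textbf{injectivity of the assignment} $(\Phi,\Psi)\mapsto\Lambda(\Phi,\Psi)$ on the pairs admissible in Definition \ref{dualE8leech}, together with Lemma \ref{OssOlemma}. The ``if'' direction is immediate: $\Lambda(s,s')$ is by definition $\Lambda(\mathsf{O}s,\mathsf{O}s')$, which depends only on the sublattices $\mathsf{O}s$ and $\mathsf{O}s'$; if $s\equiv t$ and $s'\equiv t'\pmod{2\mathsf{O}}$ then Lemma \ref{OssOlemma} gives $\mathsf{O}s=\mathsf{O}t$ and $\mathsf{O}s'=\mathsf{O}t'$, hence $\Lambda(s,s')=\Lambda(t,t')$.

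For the ``only if'' direction I would recover $\Phi$ and $\Psi$ intrinsically from $\Lambda=\Lambda(\Phi,\Psi)$ by testing against two special families of coordinate patterns, using only that $2\mathsf{O}\subset\Phi$ and $2\mathsf{O}\subset\Psi$. First, for a vector of the form $(a,a,0)$ the pairwise sums are $2a,\,a,\,a$ and the total sum is $2a$; since $2a$ automatically lies in $\Phi$ and in $\Psi$, the membership conditions of Definition \ref{dualE8leech} collapse to the single condition $a\in\Phi$. Thus $\Phi=\{a\in\mathsf{O}\mid(a,a,0)\in\Lambda\}$. Second, for a vector $(z,z,z)$ the pairwise sums are all $2z\in\Phi$ automatically, and the total sum $3z$ lies in $\Psi$ if and only if $z\in\Psi$ (because $2z\in\Psi$); hence $\Psi=\{z\in\mathsf{O}\mid(z,z,z)\in\Lambda\}$. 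Consequently $\Lambda(\mathsf{O}s,\mathsf{O}s')=\Lambda(\mathsf{O}t,\mathsf{O}t')$ forces $\mathsf{O}s=\mathsf{O}t$ and $\mathsf{O}s'=\mathsf{O}t'$, and Lemma \ref{OssOlemma} then yields $s\equiv t$ and $s'\equiv t'\pmod{2\mathsf{O}}$.

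There is no real obstacle here; the only point requiring care is verifying that the defining conditions of $\Lambda(\Phi,\Psi)$ genuinely degenerate as claimed for the test vectors $(a,a,0)$ and $(z,z,z)$, which is an immediate consequence of $2\mathsf{O}\subset\Phi\cap\Psi$. As an optional cross-check one can also extract $\Phi$ via the patterns $(a,0,a)$ or $(0,a,a)$, exploiting the coordinate symmetry of $\Lambda(\Phi,\Psi)$, though this is not needed for the proof.
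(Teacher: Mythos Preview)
Your proof is correct and follows the same overall route as the paper: reduce to the injectivity of $(\Phi,\Psi)\mapsto\Lambda(\Phi,\Psi)$ and then invoke Lemma \ref{OssOlemma}. The paper's proof is a one-liner that simply asserts this injectivity (``$\Lambda(\mathsf{O}s,\mathsf{O}s')$ and $\Lambda(\mathsf{O}t,\mathsf{O}t')$ are equal if and only if $\mathsf{O}s=\mathsf{O}t$ and $\mathsf{O}s'=\mathsf{O}t'$'') without justification, whereas you actually supply the missing argument by recovering $\Phi$ and $\Psi$ from $\Lambda$ via the test patterns $(a,a,0)$ and $(z,z,z)$. Your version is therefore more complete than the paper's own proof.
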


\begin{proof}
    This follows from Lemma \ref{OssOlemma} and the fact that $\Lambda(s,s') = \Lambda(\mathsf{O}s, \mathsf{O}s')$ and $\Lambda(t,t') = \Lambda(\mathsf{O}t, \mathsf{O}t')$ are equal if and only if $\mathsf{O}s = \mathsf{O}t$ and $\mathsf{O}s' = \mathsf{O}t'$.    
\end{proof}

\begin{remark}
    The octonion integer units $\alpha_1,\ldots, \alpha_8$ given above form a basis for $\mathsf{O}$, since they correspond to simple roots. This means that the lattice $\Lambda(s,s')$ has the following basis,
    \begin{align*}
        (\alpha_i s, \alpha_i s, 0), ~ (0, \alpha_is, \alpha_is), ~ (\alpha_i s', \alpha_i s', \alpha_i s'), \quad i=1,2,\ldots, 8.
    \end{align*}
\end{remark}

\begin{definition}
    The following \textit{translation} or \textit{multiplication maps} acting on $\mathsf{O}$ are defined as in \cite{conway_quaternions_2003}:
    \begin{align*}
        \mathtt{L}_x: y \mapsto xy, \quad 
        \mathtt{R}_x: y \mapsto yx, \quad 
        \mathtt{B}_x: y \mapsto xyx.
    \end{align*}
\end{definition}

We can extend these translations to a diagonal action on $\mathsf{O}^3$ as follows.

\begin{definition}
    For $\mathtt{X}$ one of $\mathtt{L}, \mathtt{R}, \mathtt{B}$ we define the following diagonal action on $(x,y,z) \subset \mathsf{O}^3$:
    \begin{align*}
        \mathtt{X}_u(x,y,z) = (\mathtt{X}_u(x), \mathtt{X}_u(y), \mathtt{X}_u(z)). 
    \end{align*}
\end{definition}

For any unit $u$ in $\mathsf{O}$, the translation maps $\mathtt{L}_u$, $\mathtt{R}_u$, and $\mathtt{B}_u$ are isometries of both $\mathsf{O}$ and $\mathsf{O}^3$.
This means that given Leech lattice $\Lambda(s,s')$ we can obtain another Leech lattice using a translation map $\mathtt{X}_u$. In fact, we can use octonion translations to describe a single orbit of $8640$ octonion integer Leech lattices of the form $\Lambda(s,s')$.
In order to describe this orbit, we can use certain helpful properties of translation maps due to properties of the octonion algebra as a Moufang loop. 

\begin{lemma}
    For any $x,y, u$ in $\mathsf{O}$ with $u$ a unit, the translations given above satisfy the following: 
    \begin{align*}
        \mathtt{L}_u(xy) &= \mathtt{B}_u(x)\mathtt{L}_{\overline{u}}(y), & 
        \mathtt{R}_u(xy) &= \mathtt{R}_{\overline{u}}(x)\mathtt{B}_{u}(y), &
        \mathtt{B}_u(xy) &= \mathtt{L}_u(x)\mathtt{R}_u(y).
    \end{align*}
\end{lemma}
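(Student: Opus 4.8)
The plan is to prove the three identities
$\mathtt{L}_u(xy) = \mathtt{B}_u(x)\mathtt{L}_{\overline{u}}(y)$,
$\mathtt{R}_u(xy) = \mathtt{R}_{\overline{u}}(x)\mathtt{B}_u(y)$, and
$\mathtt{B}_u(xy) = \mathtt{L}_u(x)\mathtt{R}_u(y)$
by unwinding the definitions and recognizing each as a direct consequence of the Moufang identities together with the relation $u\overline{u} = \overline{u}u = N(u) = 1$ for a unit $u$. First I would recall the standard Moufang identities valid in any composition algebra: the left Moufang identity $u(x(uy)) = (u x u)y$, the right Moufang identity $((xu)y)u = x(uyu)$, and the middle (flexible/Bruck--Moufang) identity $u(xy)u = (ux)(yu)$. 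These are well known for $\mathbb{O}$ (see for instance the references on octonions already cited in the excerpt), so I would assume them. Each identity also uses that $\mathbb{O}$ is alternative, so that subalgebras generated by two elements are associative, which is implicit in manipulating expressions like $u\overline{u}$.

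The key steps, in order: (1) Rewrite the third identity $\mathtt{B}_u(xy) = \mathtt{L}_u(x)\mathtt{R}_u(y)$ as $u(xy)u = (ux)(yu)$, which is exactly the middle Moufang identity — so this one is immediate. (2) For the first identity, note $\mathtt{B}_u(x)\mathtt{L}_{\overline{u}}(y) = (uxu)(\overline{u}y)$. I would apply the middle Moufang identity in the form $(uxu)(\overline{u}y)$; more directly, substitute $u \mapsto u$, and use $u \cdot u \cdot \overline{u} = u(u\overline{u}) = u$ since $\{u,\overline{u}\}$ generate an associative (indeed commutative) subalgebra and $u\overline{u}=1$. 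Concretely, $(uxu)(\overline{u}y)$: group as $\big((ux)u\big)(\overline{u}y)$ and apply the identity $(a u)(\overline{u} b) = a b$ whenever $u\overline{u}=1$, which follows from the middle Moufang identity $u(\cdot)\overline{u}$ reasoning or from the Moufang identity $(au)(\overline{u}b) = a(u\overline{u})b = ab$ valid because this is a consequence of the alternative law applied to the triple $a, u, \overline{u}b$ combined with $u\overline{u}=1$. Taking $a = ux$, $b = y$ gives $(uxu)(\overline{u}y) = (ux)y = u(xy)$, using left alternativity $(ux)y$ versus $u(xy)$ — wait, those differ in general, so here I would instead use the left Moufang identity directly: $\mathtt{B}_u(x)\mathtt{L}_{\overline{u}}(y) = (uxu)(\overline{u}y)$ and the left Bol/Moufang identity $(uxu)(\overline u y)=u\big(x(u(\overline u y))\big)=u(x y)$ after collapsing $u\overline u=1$. (3) The second identity is the mirror image of the first under the anti-automorphism $x\mapsto\overline{x}$, or equivalently under reversing all products; I would derive it from the right Moufang identity by the symmetric computation, or simply state it follows "by the analogous argument with left and right interchanged."

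The main obstacle is bookkeeping: octonion multiplication is non-associative, so every regrouping must be justified by an explicit Moufang or alternative-law identity rather than by associativity, and it is easy to write down a plausible-looking collapse like $(au)(\overline u b)=ab$ that is actually false without invoking the right Moufang identity $((b u)\overline u)$-type reasoning. To keep this airtight I would (a) fix the precise three Moufang identities I am quoting, with a reference; (b) prove the single auxiliary fact $(xu)(\overline{u}y) = x y$ for a unit $u$ as a standalone lemma (it is the "$u\overline u=1$ collapse" and follows from the middle Moufang identity $u(\cdot)\overline u$ applied cleverly, or from linearizing the alternative laws); and (c) then obtain all three displayed identities as one-line consequences. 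I expect the write-up to be short once the auxiliary collapse lemma is in place; without it, the temptation to manipulate octonions associatively is the trap.
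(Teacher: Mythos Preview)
Your proposal is correct and takes essentially the same approach as the paper: both recognize the three translation identities as restatements of the Moufang identities together with $u^{-1}=\overline{u}$ for a unit $u$. The paper is simply more terse, quoting the Moufang identities already in the inverse form $u(xy)=(uxu)(u^{-1}y)$, $(xy)u=(xu^{-1})(uyu)$, $u(xy)u=(ux)(yu)$ from Conway--Smith, whereas you start from the standard Moufang identities and supply the extra collapse $u(\overline{u}y)=y$ via alternativity (valid since $\overline{u}\in\mathbb{R}(u)$); once your exposition is cleaned of the exploratory detours it is equivalent to the paper's one-line proof.
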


\begin{proof}
    These are another form of the Moufang identities that the octonions satisfy, as described in \cite[74]{conway_quaternions_2003}:
     \begin{align*}
        u(xy) = (u x u) (u^{-1} y), \quad 
        (xy)u = (x u^{-1})(uy u), \quad 
        u(xy) u = (ux)(y u).
    \end{align*}
\end{proof}

These properties of translations allow us to permute $\mathtt{E}_8$ sublattices of $\mathsf{O}$, which all have the form $\mathsf{O}s$ or $s\mathsf{O}$ for some norm $2$ octonion integer $s$. 

\begin{lemma}
    Let $u$ be a unit and $s$ a norm $2$ element in $\mathsf{O}$. Then we have,
    \begin{align*}
        u(\mathsf{O}s) &= \mathsf{O}(\overline{u}s), & 
        (\mathsf{O}s) u &= \mathsf{O}(us u), & 
        u(\mathsf{O}s)u &= \mathsf{O}(s u), \\
        u(s\mathsf{O}) &= (us u)\mathsf{O}, & 
        (s\mathsf{O}) u &= (s \overline{u})\mathsf{O}, & 
        u(s\mathsf{O})u &= (us)\mathsf{O}.
    \end{align*}
    which can also be written as,
    \begin{align*}
         \mathtt{L}_u(\mathsf{O}s) &= 
             \mathsf{O}
            \mathtt{L}_{\overline{u}}(s), & 
        \mathtt{R}_u(\mathsf{O}s) &= 
            \mathsf{O}
            \mathtt{B}_{u}(s), &
        \mathtt{B}_u(\mathsf{O}s) &=     
            \mathsf{O}
            \mathtt{R}_u(s), \\
        \mathtt{L}_u(s \mathsf{O}) &= 
            \mathtt{B}_u(s)
            \mathsf{O}, & 
        \mathtt{R}_u(s \mathsf{O}) &= 
            \mathtt{R}_{\overline{u}}(s)
            \mathsf{O}, &
        \mathtt{B}_u(s \mathsf{O}) &= 
            \mathtt{L}_u(s)
            \mathsf{O}.
    \end{align*}
\end{lemma}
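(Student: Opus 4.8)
The plan is to reduce each of the six claimed identities to a single Moufang identity established in the previous lemma, applied elementwise to the sublattice, and then to note that right-multiplication (resp. left-multiplication) by a unit $u$ is a bijection of $\mathsf{O}$ so that expressions like $\mathsf{O}u$ collapse back to $\mathsf{O}$. Concretely, consider the first identity $u(\mathsf{O}s) = \mathsf{O}(\overline{u}s)$. For any $a \in \mathsf{O}$ the Moufang identity $u(as) = (uau)(u^{-1}s) = \mathtt{B}_u(a)\,\mathtt{L}_{\overline{u}}(s)$, using $u^{-1} = \overline{u}$ for a unit, rewrites $u(as)$ as $\mathtt{B}_u(a)\cdot(\overline{u}s)$. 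As $a$ ranges over $\mathsf{O}$, $\mathtt{B}_u(a) = uau$ ranges over all of $\mathsf{O}$ since $\mathtt{B}_u$ is a bijection of $\mathsf{O}$ (it is the composite of the two bijections $\mathtt{L}_u$ and $\mathtt{R}_u$, and preserves $\mathsf{O}$ because $u$ is a unit, i.e.\ $\mathtt{B}_u \in \mathrm{Aut}(\mathsf{O})$-orbit data from the translation-isometry remark). Hence $u(\mathsf{O}s) = \mathsf{O}(\overline{u}s) = \mathsf{O}\,\mathtt{L}_{\overline{u}}(s)$, which is the first claim.

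Next I would treat the remaining five identities the same way, each time invoking the matching Moufang rewriting. For $(\mathsf{O}s)u$: $(as)u = (a u^{-1})(usu) = \mathtt{R}_{\overline u}(a)\,\mathtt{B}_u(s)$, and since $\mathtt{R}_{\overline u}$ is a bijection of $\mathsf{O}$ we get $(\mathsf{O}s)u = \mathsf{O}(usu) = \mathsf{O}\,\mathtt{B}_u(s)$. For $u(\mathsf{O}s)u$: $u(as)u = (ua)(su) = \mathtt{L}_u(a)\,\mathtt{R}_u(s)$, and $\mathtt{L}_u$ bijective on $\mathsf{O}$ gives $u(\mathsf{O}s)u = \mathsf{O}(su) = \mathsf{O}\,\mathtt{R}_u(s)$. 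The three identities for $s\mathsf{O}$ are obtained by the mirror-image Moufang identities (or by applying octonion conjugation, which swaps $\mathsf{O}s \leftrightarrow \overline{s}\,\mathsf{O}$ and intertwines $\mathtt{L}_u$ with $\mathtt{R}_{\overline u}$): $u(sa) = (usu)(u^{-1}a)$ yields $u(s\mathsf{O}) = (usu)\mathsf{O} = \mathtt{B}_u(s)\mathsf{O}$; $(sa)u = (su^{-1})(uau)$ yields $(s\mathsf{O})u = (s\overline u)\mathsf{O} = \mathtt{R}_{\overline u}(s)\mathsf{O}$; and $u(sa)u = (us)(au)$ yields $u(s\mathsf{O})u = (us)\mathsf{O} = \mathtt{L}_u(s)\mathsf{O}$. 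Finally I would remark that the translation-map rewriting in the second display is just a transcription of the first display using the definitions $\mathtt{L},\mathtt{R},\mathtt{B}$, so nothing further is needed.

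The one point requiring care — the main (mild) obstacle — is the non-associativity bookkeeping: one must be sure that the grouping of the products is exactly the grouping licensed by the stated Moufang identity, since $\mathsf{O}s$ means $\{as : a\in\mathsf{O}\}$ with the product taken as written, and $u(\mathsf{O}s)$ means $\{u(as)\}$ (not $\{(ua)s\}$). This is precisely why the Moufang identities, rather than naive associativity, are invoked, and why each of the six cases needs its own identity rather than a single generic manipulation. Everything else is a direct consequence of the bijectivity of unit translations on $\mathsf{O}$.
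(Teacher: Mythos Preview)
Your proposal is correct and follows exactly the approach of the paper's own proof, which simply states that the identities follow from the Moufang identities together with the fact that translation of $\mathsf{O}$ by any unit permutes $\mathsf{O}$; you have merely written out each of the six cases explicitly. The only cosmetic blemish is the garbled parenthetical about ``$\mathtt{B}_u \in \mathrm{Aut}(\mathsf{O})$-orbit data'': the correct justification, which you also give, is that $\mathtt{B}_u = \mathtt{L}_u \circ \mathtt{R}_u$ (by flexibility) and each of $\mathtt{L}_u, \mathtt{R}_u$ is a bijection of $\mathsf{O}$ since $u$ and $\overline{u}$ are units of the ring.
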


\begin{proof}
    These follow from the Moufang identities, the fact that $\overline{u}$ is a unit for $u$ a unit in $\mathsf{O}$, and the fact that any translation of $\mathsf{O}$ by a unit simply permutes the elements of $\mathsf{O}$.
\end{proof}

\begin{theorem}
    Let $u$ be a norm 1 element and let $s, s'$ be norm 2 elements in $\mathsf{O}$. 
    Let $s + s'$ have odd norm. 
    Then the $2\cdot \mathrm{SO}_8^+(2)$ isometry group, generated by translations $\mathtt{X}_u$ for all units $u$, permutes the lattices of Definition \ref{munulattice} as follows:
    \begin{align*}
        \mathtt{L}_u\Lambda(s,s') &= \Lambda(\mathtt{L}_{\overline{u}}(s),\mathtt{L}_{\overline{u}}(s')), \\
        \mathtt{R}_u \Lambda(s,s') &= \Lambda(\mathtt{B}_u(s),\mathtt{B}_u(s')), \\
        \mathtt{B}_u \Lambda(s,s') &= \Lambda(\mathtt{R}_u(s),\mathtt{R}_u(s')).
    \end{align*}
\end{theorem}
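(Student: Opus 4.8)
The plan is to reduce all three displayed identities to the preceding lemma (the one computing $\mathtt{L}_u(\mathsf{O}s)$, $\mathtt{R}_u(\mathsf{O}s)$, $\mathtt{B}_u(\mathsf{O}s)$), by first understanding how a translation map acts on an arbitrary lattice $\Lambda(\Phi,\Psi)$ of Definition \ref{dualE8leech}. The key observation is that, although the translation maps are not multiplicative on $\mathsf{O}$, each $\mathtt{X}_u$ with $u$ a unit is $\mathbb{R}$-linear, hence additive, and is a bijection of $\mathsf{O}$ (it is an isometry and, concretely, $\mathtt{L}_u^{-1}=\mathtt{L}_{\overline u}$, with the analogous statements for $\mathtt{R}$ and $\mathtt{B}$, using that $\mathbb{R}(u,y)$ is associative); its diagonal extension to $\mathsf{O}^3$ inherits both properties.

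First I would apply $\mathtt{X}_u$ coordinatewise to the defining conditions $a+b,\,b+c,\,a+c\in\Phi$ and $a+b+c\in\Psi$ of Definition \ref{dualE8leech}. Since $\mathtt{X}_u$ commutes with addition and is a bijection, $(a,b,c)\in\Lambda(\Phi,\Psi)$ if and only if $\mathtt{X}_u(a,b,c)\in\Lambda(\mathtt{X}_u\Phi,\mathtt{X}_u\Psi)$, so $\mathtt{X}_u\,\Lambda(\Phi,\Psi)=\Lambda(\mathtt{X}_u\Phi,\mathtt{X}_u\Psi)$. Because $\mathtt{X}_u$ is an additive bijection carrying $\mathtt{E}_8$ sublattices to $\mathtt{E}_8$ sublattices and fixing $2\mathsf{O}$ setwise, the images $\mathtt{X}_u\Phi$, $\mathtt{X}_u\Psi$ still satisfy $\mathtt{X}_u\Phi+\mathtt{X}_u\Psi=\mathsf{O}$ and $\mathtt{X}_u\Phi\cap\mathtt{X}_u\Psi=2\mathsf{O}$, so the right-hand side is again a Leech lattice in the sense of Definition \ref{dualE8leech}.

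Next I would specialize to $\Phi=\mathsf{O}s$, $\Psi=\mathsf{O}s'$, so that $\Lambda(\Phi,\Psi)=\Lambda(s,s')$, and substitute the three identities of the preceding lemma, $\mathtt{L}_u(\mathsf{O}s)=\mathsf{O}\,\mathtt{L}_{\overline u}(s)$, $\mathtt{R}_u(\mathsf{O}s)=\mathsf{O}\,\mathtt{B}_u(s)$, $\mathtt{B}_u(\mathsf{O}s)=\mathsf{O}\,\mathtt{R}_u(s)$, applied to both $s$ and $s'$; this gives exactly the three displayed formulas. It then remains to check that each right-hand side is genuinely an instance of Definition \ref{munulattice}, i.e. that, for example, $\mathsf{O}\,\mathtt{L}_{\overline u}(s)\cap\mathsf{O}\,\mathtt{L}_{\overline u}(s')=2\mathsf{O}$, and similarly with $\mathtt{B}_u$ and $\mathtt{R}_u$. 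Each of $\mathtt{L}_{\overline u}$, $\mathtt{B}_u$, $\mathtt{R}_u$ is additive and norm-preserving (it is an isometry of $\mathsf{O}$, and $\overline u$ is a unit whenever $u$ is), so $N(\mathtt{L}_{\overline u}(s)+\mathtt{L}_{\overline u}(s'))=N(\mathtt{L}_{\overline u}(s+s'))=N(s+s')$, which is odd by hypothesis; Lemma \ref{nonintersecting} then delivers the required intersection, and the norm-$2$ hypothesis on the new arguments is immediate since these maps preserve norm. Invertibility of every $\mathtt{X}_u$ finally upgrades this to the assertion that the generated group permutes the family of lattices of Definition \ref{munulattice}.

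The one ingredient I would import rather than prove is the identification of the generated group with $2\cdot\mathrm{SO}_8^+(2)$: since $\mathtt{B}_u=\mathtt{L}_u\mathtt{R}_u=\mathtt{R}_u\mathtt{L}_u$, the group is the multiplication (enveloping) group of the Moufang loop of the $240$ units of $\mathsf{O}$, with central involution $-\mathrm{id}=\mathtt{L}_{-1}=\mathtt{R}_{-1}$ accounting for the prefix $2\cdot$, and this is the cited group. I expect this last point to be the only genuine obstacle, in the sense that it is the only step not derivable from the excerpt; everything else is bookkeeping built on the preceding lemma and Lemma \ref{nonintersecting}. A useful sanity check is that the orbit of a single $\Lambda(s,s')$ under this group should have the length $8640$ quoted in the surrounding text, which pins down the order of the stabilizer of the pair of residue classes $(s+2\mathsf{O},\,s'+2\mathsf{O})$.
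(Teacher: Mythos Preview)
Your proposal is correct and follows essentially the same approach as the paper: both derive the three displayed identities from the definition $\Lambda(s,s')=\Lambda(\mathsf{O}s,\mathsf{O}s')$ together with the preceding lemma (the Moufang identities applied to $\mathsf{O}s$), and both defer the identification of the generated group as $2\cdot\mathrm{SO}_8^+(2)$ to an external computation (the paper cites a GAP verification on a canonical copy of $\mathsf{O}$). Your write-up is in fact more explicit than the paper's, which omits the intermediate step $\mathtt{X}_u\Lambda(\Phi,\Psi)=\Lambda(\mathtt{X}_u\Phi,\mathtt{X}_u\Psi)$ and the verification via Lemma~\ref{nonintersecting} that the image pair still satisfies the odd-norm condition.
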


\begin{proof}
    The identities here follow from the definition of $\Lambda(s, s') = \Lambda(\mathsf{O}s, \mathsf{O}s')$ and the Moufang identities expressed in terms of $\mathtt{L}$, $\mathtt{R}$, $\mathtt{B}$.
    The $2\cdot \mathrm{SO}_8^+(2)$ group of translations $\mathtt{X}_u$ can be constructed and identified in GAP using a canonical copy of $\mathsf{O}$. 
\end{proof}

\begin{theorem}
    Let $s, s'$ in $\mathsf{O}$ be norm $2$ octonion integers with $s + s'$ having an odd norm. 
    Then there exists a norm $2$ root of $x^2 + x +2$ in $\mathsf{O}$, called $\lambda$, and unit $u$ in $\mathsf{O}$ such that,
    \begin{align*}
        \Lambda(s,s') = \mathtt{L}_u \Lambda(\overline{\lambda},\lambda).
    \end{align*}
    There are $8640 = 72 \cdot 120$ choices of $\lambda$ and $u$, taken modulo $2\mathsf{O}$, corresponding to the $8640$ lattices of the form $\Lambda(s,s')$.
    These form a single orbit under the action of the $2 \cdot \mathrm{SO}_8^+(2)$ generated by $\mathtt{X}_u$ for $u$ any octonion unit and $\mathtt{X} = \mathtt{L}, \mathtt{R}, \mathtt{B}$.
    \label{leechulambda}
\end{theorem}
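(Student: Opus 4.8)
The plan is to prove the statement by a counting argument that sidesteps any direct orbit analysis. Write $\mathcal{S}$ for the set of lattices $\Lambda(s,s')$ with $N(s+s')$ odd, and $\mathcal{F}$ for the explicit family $\{\,\mathtt{L}_u\Lambda(\overline\lambda,\lambda)\,\}$ ranging over octonion units $u$ and norm $2$ roots $\lambda$ of $x^2+x+2$. I will show $|\mathcal{S}|=8640$, that $\mathcal{F}\subseteq\mathcal{S}$ and $|\mathcal{F}|=8640$, and then conclude $\mathcal{F}=\mathcal{S}$ by equality of cardinalities; the single-orbit assertion then drops out at the end.

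First I would count $\mathcal{S}$. By the Lemma characterising equality of the lattices $\Lambda(s,s')$, the elements of $\mathcal{S}$ are in bijection with the admissible ordered pairs $(\bar s,\bar s')$ of residue classes in $\mathsf{O}/2\mathsf{O}$, and by Lemma~\ref{nonintersecting} admissibility---$\mathsf{O}s\cap\mathsf{O}s'=2\mathsf{O}$---is exactly that $N(s+s')$ is odd. Since $s,s'$ have norm $2$, the classes $\bar s,\bar s'$ lie among the $135$ nonzero isotropic vectors of $\mathsf{O}/2\mathsf{O}$, and $N(s+s')\bmod 2=\langle\bar s,\bar s'\rangle\bmod 2$, so admissibility says precisely that $\bar s$ and $\bar s'$ are non-orthogonal mod $2$ (in particular distinct). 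In the strongly regular graph $\mathrm{srg}(135,70,37,35)$ of Example~\ref{e8sublattices} each vertex has $70$ neighbours, hence $135-1-70=64$ non-neighbours, giving $135\cdot 64=8640$ admissible ordered pairs, so $|\mathcal{S}|=8640$.

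Next I would analyse $\mathcal{F}$. For a unit $u$ and a root $\lambda$ of $x^2+x+2$, the preceding theorem gives $\mathtt{L}_u\Lambda(\overline\lambda,\lambda)=\Lambda(\overline u\,\overline\lambda,\overline u\lambda)$; here $\overline u\,\overline\lambda$ and $\overline u\lambda$ have norm $N(u)N(\lambda)=2$, and their sum $\overline u(\overline\lambda+\lambda)=\overline u\cdot 2\mathrm{Re}(\lambda)=-\overline u$ has norm $N(u)=1$, so this lattice lies in $\mathcal{S}$; thus $\mathcal{F}\subseteq\mathcal{S}$, with $\Lambda(\overline\lambda,\lambda)\in\mathcal{F}$ the case $u=1$. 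The lattice $\Lambda(\overline u\,\overline\lambda,\overline u\lambda)$ depends on $u,\lambda$ only through their residue classes mod $2\mathsf{O}$. The induced map on residue classes is injective: if $\Lambda(\overline u_1\,\overline{\lambda_1},\overline u_1\lambda_1)=\Lambda(\overline u_2\,\overline{\lambda_2},\overline u_2\lambda_2)$, then adding the two resulting coordinate congruences gives $\overline u_1\equiv\overline u_2\pmod{2\mathsf{O}}$, so $u_1\equiv u_2$; substituting back into the second-coordinate congruence and applying left multiplication by $u_1$ (well defined on $\mathsf{O}/2\mathsf{O}$ since $u_1$ is a unit, and satisfying $u_1(\overline u_1 y)=N(u_1)y=y$ by alternativity) yields $\lambda_1\equiv\lambda_2\pmod{2\mathsf{O}}$. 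Finally the index set has the right size: there are $240/2=120$ residue classes of units, and the $576+576=1152$ roots of $x^2\pm x+2$ fill exactly the $1152/16=72$ frames all of whose norm $2$ representatives have half-integer real part; hence $|\mathcal{F}|=72\cdot 120=8640$.

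Combining, $\mathcal{F}\subseteq\mathcal{S}$ with $|\mathcal{F}|=|\mathcal{S}|=8640$, so $\mathcal{F}=\mathcal{S}$: every $\Lambda(s,s')$ with $N(s+s')$ odd equals $\mathtt{L}_u\Lambda(\overline\lambda,\lambda)$ for suitable $\lambda,u$, and the $72\cdot 120$ residue-class pairs $(\lambda,u)$ biject with these lattices. For the orbit statement, the group $2\cdot\mathrm{SO}_8^+(2)$ generated by the $\mathtt{X}_u$ permutes $\mathcal{S}$, since each $\mathtt{X}_u$ is an $\mathbb{R}$-linear norm-preserving map sending $\Lambda(s,s')$ to another lattice of the same form (preceding theorem) whose coordinate sum is the isometric image of $s+s'$, hence still of odd norm; since $\mathcal{S}$ is already exhausted by the $\mathtt{L}_u$ applied to the single lattice $\Lambda(\overline\lambda,\lambda)$, the action is transitive. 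The step I expect to need the most care is the frame-counting claim that the $1152$ roots of $x^2\pm x+2$ occupy exactly $72$ residue classes---equivalently that a frame meeting the root set consists entirely of norm $2$ octonion integers of half-integer real part---together with the alternativity identities underlying the injectivity of $\mathcal{F}$; these are structural facts about $\mathsf{O}/2\mathsf{O}$ that are confirmed by the GAP computation on a canonical copy of $\mathsf{O}$ used elsewhere in the paper.
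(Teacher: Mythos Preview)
Your counting argument for the bijection $\mathcal{F}=\mathcal{S}$ is correct and essentially parallels the paper's approach: both identify the lattices $\Lambda(s,s')$ with ordered pairs of non-orthogonal isotropic vectors in $\mathsf{O}/2\mathsf{O}$ and arrive at $135\cdot 64=8640$, and both parametrize by the $72\cdot 120$ pairs $(\lambda,u)$ modulo $2\mathsf{O}$. Your explicit injectivity argument (adding the two congruences to recover $u$, then cancelling via alternativity to recover $\lambda$) is a genuine improvement over the paper, which simply asserts that the $72\cdot 120$ pairs recover all $8640$ directed edges and defers to GAP. The frame-count $1152/16=72$ that you flag is straightforward: elements of $2\mathsf{O}$ have integer real part, so a norm~$2$ integer with half-integer real part cannot be congruent modulo $2\mathsf{O}$ to one with integer real part.

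There is, however, a real gap in your transitivity argument. You write that ``$\mathcal{S}$ is already exhausted by the $\mathtt{L}_u$ applied to the single lattice $\Lambda(\overline\lambda,\lambda)$,'' but your own parametrization of $\mathcal{F}$ ranges over \emph{all $72$} residue classes of $\lambda$, not a single one. What you have actually shown is that every lattice in $\mathcal{S}$ lies in the $\mathtt{L}_u$-translate of \emph{some} $\Lambda(\overline\lambda,\lambda)$; this bounds the number of orbits by $72$, not by $1$. To close the gap you must show that the $72$ base lattices $\Lambda(\overline\lambda,\lambda)$ themselves lie in a single orbit under the group generated by all $\mathtt{X}_u$. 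One route: the inner automorphisms $x\mapsto \omega^{-1}x\omega=\mathtt{L}_{\omega^{-1}}\mathtt{R}_\omega(x)$ lie in this group, and $\mathrm{Aut}(\mathsf{O})$ is transitive on the $576$ roots of $x^2+x+2$, hence on their $72$ residue classes---but you would still need to check that the index-$2$ subgroup $\mathrm{PSU}_3(3)$ of inner automorphisms already acts transitively on those $72$ classes, or otherwise supplement with an $\mathtt{R}_u$ or $\mathtt{B}_u$ move. The paper sidesteps all of this by directly verifying directed-edge transitivity in GAP.
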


\begin{proof}
    First, we construct a graph on the $135$ norm $2$ octonion integers modulo $2\mathsf{O}$ where two vertices are adjacent when the sum of their representatives has odd norm. This graph is a $\mathrm{srg}(135, 64, 28, 32)$ and each \textit{directed edge} $s \rightarrow s'$ represents a Leech lattice of the form $\Lambda(s,s')$ (in general $\Lambda(s,s') \ne \Lambda(s',s)$). 
    Second, we can construct an edge-transitive automorphism group of this graph generated by the translation maps $\mathtt{X}_u$ acting on vertices. Indeed, this automorphism group is transitive on directed edges. 
    Third, we can recover all $8640$ directed edges by computing $(\mathtt{L}_{\overline{u}}(\overline{\lambda}), \mathtt{L}_{\overline{u}}(\lambda)) \mod 2\mathsf{O}$ for the $72$ choices of $\lambda$ and the $120$ choices of $u$ modulo $2\mathsf{O}$. 
    These facts can be confirmed by computation using GAP. 
\end{proof}

\begin{remark}
    The $2 \times 8640$ Leech lattices of the form $\Lambda(\mathsf{O}s, \mathsf{O}s')$ or $\Lambda(s\mathsf{O}, s'\mathsf{O})$ are identified and described in the mathematics blog posts \cite{baez_integral_2014} and \cite{baez_integral_2014-1}, by John Baez and Greg Egan. They use a combination of direct computation and analysis of $\mathtt{E}_8$ lattice properties to identify these lattices. 
    The approach shown here instead makes use of the ring homomorphism $\mathsf{O} \mapsto \mathsf{O}/2\mathsf{O}$ and Theorem \ref{lepowskytheorem} of \cite{lepowsky_e8-approach_1982} to simplify the calculations to properties of a strongly regular graph on $135$ points, which can easily be explored by computation or other graph theory techniques. 
\end{remark}

\section{Leech Lattice Symmetries}

By construction the lattice $\Lambda(\overline{\lambda},\lambda)$ is symmetric under all coordinate permutations and coordinate sign changes, a $2\times S_4$ group action. We will call these the \textit{coordinate automorphisms} of $\Lambda(\overline{\lambda},\lambda)$.
The lattice $\Lambda(\overline{\lambda},\lambda)$ is also fixed under the octonion automorphisms that preserve our canonical choice of arithmetic $\mathsf{O}$ and residue class $\lambda + 2\mathsf{O}$.
We call automorphisms belonging to this $\mathrm{PSL}_2(7) \subset \mathrm{Aut}(\mathsf{O})$ the \textit{scalar automorphisms} of $\Lambda(\overline{\lambda},\lambda)$.

We now introduce \textit{reflection automorphisms} of $\Lambda(\overline{\lambda},\lambda)$. 
A computer search of a canonical copy of  $\Lambda(\overline{\lambda}, \lambda)$ shows that the short vectors of this Leech lattice contain $2\times 1260$ commutative octonion integer triples, namely triples where the three coefficients generate a commutative subalgebra of $\mathbb{O}$. 
Of these triples, $2\times 1176$ define reflections $\mathtt{W}_r$ acting on $\mathsf{O}^3$ that preserve the Leech lattice $\Lambda(\overline{\lambda},\lambda)$. 
These include $720$ of the form $(2u,0,0)$ for $u$ a unit in $\mathsf{O}$ and $1440$ of the form $(s,\pm s,0)$ for $s$ any $\mathtt{E}_8$ root in $\mathsf{O}\overline{\lambda}$.
For $r$ a commutative vector in $\Lambda(\overline{\lambda},\lambda)$ of the form $(2u,0,0)$ or $(s, \pm s, 0)$, the corresponding reflection $\mathtt{W}_r$ is a coordinate symmetry in $2 \times S_4$.
The remaining $2 \times 96$ commutative short vectors in $\Lambda(\overline{\lambda},\lambda)$ that define reflection symmetries have the form $(1, 1, \lambda')$ for $\lambda' \equiv \lambda \mod 2\mathsf{O}$, under all coordinate permutations and sign changes. 

\begin{figure}
    \begin{centering}
        \begin{tabular}{ccc}
        Type & Number & Comment  \\
        \hline \hline
        $(2u, 0, 0)$  & $3 \times 240$ & for $u$ a unit in $\mathsf{O}$\\
        $(s,\pm s, 0)$ & $6 \times 240$ &  for $s$ a root in $\mathsf{O}\overline{\lambda}$\\
        $(\pm 1, \pm 1, \lambda')$  & $12 \times 16$ & for $\lambda' \equiv \lambda \mod 2\mathsf{O}$\\
        \hline
    \end{tabular}    
    \end{centering}
    \caption{Commutative vectors $r$ in $\Lambda(\overline{\lambda},\lambda)$ defining reflection symmetries $\mathtt{W}_r$ of $\Lambda(\overline{\lambda},\lambda)$.}
    \label{fig:my_label}
\end{figure}

\begin{theorem}
    \label{refltheorem}
    Let $\lambda$ be an octonion integer in $\mathsf{O}$ and zero of $x^2 + x + 2$.
    The vector $r = (1,1,\lambda)$, for any $\lambda \equiv \lambda' \mod 2\mathsf{O}$, defines an automorphism $\mathtt{W}_r$ of the Leech lattice $\Lambda(\overline{\lambda'},\lambda')$. 
\end{theorem}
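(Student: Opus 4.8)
The plan is to show that $\mathtt{W}_r$ maps $\Lambda(\overline{\lambda'},\lambda')$ into itself. Since the three entries $1,1,\lambda$ of $r$ all lie in $\mathbb{R}(\lambda)\cong\mathbb{C}$, the vector $r$ is commutative, so $[r]=r^\dagger r/N(r)$ is a hermitian idempotent and $\mathtt{W}_r$ is a well-defined isometric involution of $\mathbb{O}^3$; hence it is enough to prove $\mathtt{W}_r(\Lambda)\subseteq\Lambda$, which then forces $\mathtt{W}_r(\Lambda)=\Lambda$. First I would put the lattice in a convenient form: by Definition \ref{munulattice}, $\Lambda(\overline{\lambda'},\lambda')=\Lambda(\mathsf{O}\overline{\lambda'},\mathsf{O}\lambda')$, and this depends only on the residue classes $\lambda'+2\mathsf{O}$ and $\overline{\lambda'}+2\mathsf{O}$; since $\lambda'\equiv\lambda\pmod{2\mathsf{O}}$ (whence also $\overline{\lambda'}\equiv\overline{\lambda}$), Lemma \ref{OssOlemma} gives $\mathsf{O}\lambda'=\mathsf{O}\lambda$ and $\mathsf{O}\overline{\lambda'}=\mathsf{O}\overline{\lambda}$, so $\Lambda(\overline{\lambda'},\lambda')=\Lambda(\mathsf{O}\overline{\lambda},\mathsf{O}\lambda)=:\Lambda$, and throughout I may use $2\mathsf{O}\subseteq\mathsf{O}\lambda\cap\mathsf{O}\overline{\lambda}$.

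Next I would compute the action explicitly. With $N(r)=1+1+2=4$, evaluating $x(I_3-2[r])$ on a row vector $(a,b,c)$ gives
\[
  \mathtt{W}_r(a,b,c)=\Bigl(\tfrac12(a-b-c\overline{\lambda}),\ \tfrac12(-a+b-c\overline{\lambda}),\ -\tfrac12(a+b)\lambda\Bigr).
\]
To apply this on a lattice vector I would use the third description in Definition \ref{dualE8leech}: write $(a,b,c)=(x+y+z,\ x+z,\ y+z)$ with $x=\xi\overline{\lambda}$ and $y=\eta\overline{\lambda}$ in $\mathsf{O}\overline{\lambda}$, $z=\zeta\lambda$ in $\mathsf{O}\lambda$, and $\xi,\eta,\zeta\in\mathsf{O}$. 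Substituting, and simplifying with the identities $\lambda\overline{\lambda}=\overline{\lambda}\lambda=2$, $\lambda+1=-\overline{\lambda}$, $\overline{\lambda}+1=-\lambda$, $\lambda^2=-\lambda-2$ (all consequences of $\lambda^2+\lambda+2=0$ together with $\lambda+\overline{\lambda}=2\mathrm{Re}(\lambda)=-1$), and using that every subalgebra of $\mathbb{O}$ generated by two elements is associative — each product occurring involves only one of $\xi,\eta,\zeta$ together with $\lambda,\overline{\lambda}$, so re-associations such as $(\eta\overline{\lambda})\lambda=\eta(\overline{\lambda}\lambda)$ are justified — one should arrive at the integral expression
\[
  \mathtt{W}_r(a,b,c)=\bigl(y+\eta-\zeta,\ \eta-\zeta,\ z-2\xi-\eta+2\zeta\bigr),
\]
in which the halves have cancelled.

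It then remains to verify the four conditions of Definition \ref{dualE8leech} for this output triple $(a',b',c')$. The first two components sum to $y+2(\eta-\zeta)\in\mathsf{O}\overline{\lambda}$; the last two sum to $z+\zeta-2\xi=\zeta(\lambda+1)-2\xi=-\zeta\overline{\lambda}-2\xi\in\mathsf{O}\overline{\lambda}$; the first and third sum to $y-\zeta\overline{\lambda}-2\xi\in\mathsf{O}\overline{\lambda}$; and all three sum to $y+z+\eta-2\xi=\eta(\overline{\lambda}+1)+\zeta\lambda-2\xi=(\zeta-\eta)\lambda-2\xi\in\mathsf{O}\lambda$, each time using $2\mathsf{O}\subseteq\mathsf{O}\lambda\cap\mathsf{O}\overline{\lambda}$. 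Since all four of these sums lie in $\mathsf{O}$, so do $a',b',c'$ individually (for instance $a'=(a'+b'+c')-(b'+c')$), so $(a',b',c')\in\mathsf{O}^3$ and satisfies every requirement in Definition \ref{dualE8leech}. This gives $\mathtt{W}_r(\Lambda)\subseteq\Lambda$, hence equality, so $\mathtt{W}_r$ is an automorphism of $\Lambda(\overline{\lambda'},\lambda')$.

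The only delicate point I anticipate is the simplification in the second paragraph: because $\mathbb{O}$ is non-associative, each re-association must be justified by exhibiting the octonions involved inside a common associative (two-generator) subalgebra, and one must track which products of $\lambda$ and $\overline{\lambda}$ are rational integers, since this is precisely what lets the factors of $\tfrac12$ in $\mathtt{W}_r$ land back in $\mathsf{O}$. One could instead argue directly from $a+b,\,b+c,\,a+c\in\mathsf{O}\overline{\lambda}$ and $a+b+c\in\mathsf{O}\lambda$ without the parametrization, but then the octonions obtained by cancelling $\overline{\lambda}$ need not lie in $\mathsf{O}\overline{\lambda}$ and the cancellations become messier; the parametrized form is what keeps everything manifestly integral. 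As with the enumeration preceding the theorem, the identity — and the count $2\times 96$ of such reflection vectors — can also be confirmed directly in GAP using a canonical copy of $\mathsf{O}$.
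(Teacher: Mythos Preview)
Your proof is correct and follows essentially the same strategy as the paper: reduce $\Lambda(\overline{\lambda'},\lambda')$ to $\Lambda(\overline{\lambda},\lambda)$, compute the reflection explicitly, and verify the membership conditions of Definition~\ref{dualE8leech} using the identities $\lambda+\overline{\lambda}=-1$, $\lambda\overline{\lambda}=2$, $\overline{\lambda}+1=-\lambda$ together with two-generator associativity. The only cosmetic differences are that the paper checks membership of the displacement $2(a,b,c)[r]$ rather than of $\mathtt{W}_r(a,b,c)$ itself, and parametrizes via the first description in Definition~\ref{dualE8leech} (writing $a+b=\alpha\overline{\lambda}$, $a+b+c=\beta\lambda$) whereas you use the third; your parametrization makes the integrality and the cancellation of the $\tfrac12$'s slightly more transparent.
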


\begin{proof}
    To begin, we write $\Lambda(\overline{\lambda'},\lambda') = \Lambda(\overline{\lambda},\lambda)$ since $\lambda' \equiv \lambda \mod 2\mathsf{O}$. Note that $\lambda + \overline{\lambda} = -1$ and that $1-\overline{\lambda} = - \lambda^2$. 
    A vector $(a,b,c) \in \Lambda(\overline{\lambda},\lambda)$ is reflected as follows:
    \begin{align*}
        \mathtt{W}_r(a,b,c) = (a,b,c) - 2(a,b,c)[r].
    \end{align*}
    Since $(a,b,c)$ is in the Leech lattice, we will have $\mathtt{W}_r(a,b,c)$ also in the Leech lattice when  $2(a,b,c)[r]$ is in the Leech lattice.
    Let $(a',b',c') = 2(a,b,c)[r]$. 
    We can compute $(a',b',c')$ by first computing $[r]$.
    \begin{align*}
        [r] = \frac{r^\dagger r}{r r^\dagger} = \frac{1}{4}\left(\begin{matrix} 1 \\ 1 \\ \overline{\lambda} \end{matrix}\right) \left(1,1,\lambda\right)
        = \frac{1}{4}\left(
        \begin{matrix}
        1 & 1 & \lambda \\
        1 & 1 & \lambda \\
        \overline{\lambda} & \overline{\lambda} & 2 
        \end{matrix}\right).
    \end{align*}
    This means that we have,
    \begin{align*}
        (a',b',c') &= 2(a,b,c)[r] = \frac{1}{2}(a,b,c)\left(
        \begin{matrix}
        1 & 1 & \lambda \\
        1 & 1 & \lambda \\
        \overline{\lambda} & \overline{\lambda} & 2 
        \end{matrix}\right) \\
        &= \frac{1}{2}\left(a + b + c\overline{\lambda}, a + b + c\overline{\lambda}, a\lambda + b \lambda + 2c \right).
    \end{align*}
    We first verify that $a'+ b' + c' \in \mathsf{O} \lambda$.
    \begin{align*}
        a'+b'+c' = (a+b)\left(1 + \frac{\lambda}{2}\right) + c (\overline{\lambda} +1).
    \end{align*}
    By construction $a+b$ is in $\mathsf{O} \overline{\lambda}$ so we have $a+b = \alpha \overline{\lambda}$ for some $\alpha$ in $\mathsf{O}$. Furthermore, $\alpha, \overline{\lambda}, 1 + \frac{\lambda}{2}$ belong to a common associative subalgebra of $\mathbb{O}$. This means that we use $\overline{\lambda}\lambda = 2$ and simplify as follows:
    \begin{align*}
        a'+b'+c' = (\alpha + c )(\overline{\lambda} +1).
    \end{align*}
    But $\overline{\lambda} + 1 = - \lambda$ so $a' + b' + c'$ is in $\mathsf{O} \lambda$. 
    Next we show that $a' + b' \in \mathsf{O} \overline{\lambda}$.
    \begin{align*}
        a'+b' = a + b + c\overline{\lambda}.
    \end{align*}
    By construction $a+b = \alpha \overline{\lambda}$ for some $\alpha$ in $\mathsf{O}$. This ensures that $a'+b'$ is in $\mathsf{O}\overline{\lambda}$.
    Finally we check that $b'+c' = a'+c' \in \mathsf{O} \overline{\lambda}$.
    \begin{align*}
        a'+c' = \frac{1}{2}(a+b)\left(1 + \lambda\right)+ \frac{1}{2}c\left(2 + \overline{\lambda} \right)
        = - \frac{1}{2}(a+b)\overline{\lambda} - \frac{1}{2} c \overline{\lambda}^2
    \end{align*}
    We know that $(a+b) = \alpha \overline{\lambda}$:
    \begin{align*}
        a'+c' = - \frac{1}{2}(\alpha +c )\overline{\lambda}^2.
    \end{align*}
    We also know that $a+b+c = \beta \lambda$ and can write $c = \beta \lambda - \alpha \overline{\lambda}$:
    \begin{align*}
        a'+c' &= - \frac{1}{2}(\alpha  - \alpha \overline{\lambda} )\overline{\lambda}^2 - \frac{1}{2}(\beta \lambda )\overline{\lambda}^2 \\
        &= - \frac{1}{2}\alpha(1  - \overline{\lambda} )\overline{\lambda}^2 - \beta\overline{\lambda}.
    \end{align*}
    We use the properties $1-\overline{\lambda} = -\lambda^2$ and $\lambda^2 \overline{\lambda}^2 = 4$:
    \begin{align*}
        a'+c' &= \frac{1}{2}\alpha \lambda^2 \overline{\lambda}^2 - \beta\overline{\lambda} \\
        &= 2\alpha - \beta\overline{\lambda}.
    \end{align*}
    Since $2\alpha$ is in $2\mathsf{O} \subset \mathsf{O}\overline{\lambda}$, this ensures that $a'+c' = b'+c'$ are in $\mathsf{O}\overline{\lambda}$.
\end{proof}

Theorem \ref{refltheorem} ensures that $\mathtt{W}_r$ for $r = (1,1, \lambda')$ and $\lambda' \equiv \lambda \mod 2\mathsf{O}$ is a reflection symmetry of Leech lattice $\Lambda(\overline{\lambda},\lambda)$. 
It turns out that that each of the $2\times 96$ short vectors in $\Lambda(\overline{\lambda},\lambda)$ of the form $(\pm 1, \pm 1, \lambda')$ for $\lambda' \equiv \lambda \mod 2\mathsf{O}$ defines a reflection symmetry of $\Lambda(\overline{\lambda},\lambda)$.
Computation in GAP on a canonical example verifies the following two theorems about how to generate the full automorphism group of the Leech lattice $\Lambda(\overline{\lambda},\lambda)$. 

\begin{theorem}
    The automorphism group $2\cdot \mathrm{Co}_1$ of Leech lattice $\Lambda(\overline{\lambda},\lambda)$ is generated by reflections $\mathtt{W}_r$ for $r = (\pm 1, \pm 1,\lambda')$ under all coordinate permutations and with $\lambda' \equiv \lambda \mod 2\mathsf{O}$. 
    \label{Co1gensA}
\end{theorem}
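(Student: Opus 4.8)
The plan is to argue in two stages. \emph{Stage one:} every $\mathtt{W}_r$ with $r$ of the stated form is an automorphism of $\Lambda(\overline{\lambda},\lambda)$, so the group $G$ these reflections generate is contained in $\mathrm{Aut}(\Lambda(\overline{\lambda},\lambda)) = 2\cdot\mathrm{Co}_1$. \emph{Stage two:} $|G| = |2\cdot\mathrm{Co}_1|$, whence $G = 2\cdot\mathrm{Co}_1$.

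For stage one, first note that $N((1,1,\lambda')) = 1 + 1 + N(\lambda') = 4$ and that, using $\lambda + \overline{\lambda} = -1$ together with $2\mathsf{O} \subset \mathsf{O}\overline{\lambda} \cap \mathsf{O}\lambda$, each of the membership conditions of Definition \ref{dualE8leech} (that $a+b$, $b+c$, $a+c$ lie in $\mathsf{O}\overline{\lambda}$ and $a+b+c$ in $\mathsf{O}\lambda$) holds for $(a,b,c) = (1,1,\lambda')$ precisely because $\lambda' \equiv \lambda \bmod 2\mathsf{O}$; hence $(1,1,\lambda')$ is a short vector of $\Lambda(\overline{\lambda},\lambda)$, and the same check works verbatim for each choice of signs. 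Theorem \ref{refltheorem} then gives that $\mathtt{W}_{(1,1,\lambda')}$ preserves the lattice. The coordinate automorphism group $2 \times S_4$ (coordinate permutations and sign changes) consists of lattice automorphisms, and conjugating $\mathtt{W}_r$ by a signed permutation $g$ — a real orthogonal monomial matrix, hence one that associates and commutes with octonion entries — produces $\mathtt{W}_{g(r)}$, since $g$ carries $[r]$ to $[g(r)]$. The $2 \times S_4$-orbit of $(1,1,\lambda')$ is exactly the set of all $(\pm 1, \pm 1, \lambda')$ under coordinate permutations (the sign of $\lambda'$ may be flipped freely, as $-\lambda'$ is again a norm-$2$ representative of $\lambda + 2\mathsf{O}$), so all of the asserted reflections lie in $\mathrm{Aut}(\Lambda(\overline{\lambda},\lambda))$, and $G \le 2\cdot\mathrm{Co}_1$.

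For stage two, fix a canonical model of $\mathsf{O}$ (equivalently, of $\mathtt{E}_8/\sqrt{2}$) and, via the $\mathbb{Z}$-basis of $\Lambda(\overline{\lambda},\lambda)$ recorded in the remark after Definition \ref{munulattice}, represent each of the finitely many reflections $\mathtt{W}_r$ (finitely many because $\mathtt{W}_r = \mathtt{W}_{-r}$) as a $24 \times 24$ integer matrix. Let this finite generating set act on a $G$-invariant set that carries a faithful permutation representation of $2\cdot\mathrm{Co}_1$ — for instance the $196560$ minimal vectors of $\Lambda(\overline{\lambda},\lambda)$ — and compute the order of the resulting permutation group in GAP. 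Comparing with the known value of $|2\cdot\mathrm{Co}_1|$ and combining with $G \le 2\cdot\mathrm{Co}_1$ from stage one forces $G = 2\cdot\mathrm{Co}_1$.

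The main obstacle is stage two: there is no short conceptual reason that these specific order-$2$ isometries (each negating an $8$-dimensional subspace of $\mathbb{R}^{24}$) generate all of $2\cdot\mathrm{Co}_1$ rather than a proper subgroup, so the generation claim must be certified computationally — either by matching the group order through an explicit permutation representation, or equivalently by checking that $G$ lies inside no maximal subgroup of $2\cdot\mathrm{Co}_1$. A minor technical point is bookkeeping the octonion non-associativity when passing from the reflection formula on $\mathsf{O}^3$ to integer matrices; because $[r]$ is Hermitian with row vectors acting on the right and $r$ is commutative, all the products needed lie in associative subalgebras of $\mathbb{O}$, so the matrix entries are well defined.
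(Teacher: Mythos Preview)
Your proposal is correct and matches the paper's approach: the paper states just before this theorem that ``Computation in GAP on a canonical example verifies the following two theorems,'' so the proof of record is precisely your stage two, and stage one (containment in $\mathrm{Aut}(\Lambda(\overline{\lambda},\lambda))$) is the content of Theorem~\ref{refltheorem} combined with conjugation by the $2\times S_4$ coordinate symmetries, exactly as you outline. Your write-up in fact supplies more detail than the paper does---the explicit membership check for $(1,1,\lambda')$, the conjugation identity $g\,\mathtt{W}_r\,g^{-1}=\mathtt{W}_{g(r)}$ for real monomial $g$, and the observation that $-\lambda'\equiv\lambda\bmod 2\mathsf{O}$---none of which the paper spells out.
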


\begin{theorem}
    The automorphism group $2\cdot \mathrm{Co}_1$ of Leech lattice $\Lambda(\overline{\lambda},\lambda)$ is generated by the $2 \times S_4$ coordinate permutations and sign changes together with reflections $\mathtt{W}_r$ for $r = (1, 1,\lambda')$ for $\lambda' \equiv \lambda \mod 2\mathsf{O}$ and $\mathrm{Re}(\lambda) = \mathrm{Re}(\lambda')$. 
    \label{Co1gensB}
\end{theorem}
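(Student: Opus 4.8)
The plan is to deduce this statement from Theorem~\ref{Co1gensA} rather than to run a second, independent group computation. Let $G$ be the group generated by the $2\times S_4$ of coordinate permutations and sign changes together with the reflections $\mathtt{W}_r$, $r = (1,1,\lambda')$, for the eight $\lambda' \equiv \lambda \mod 2\mathsf{O}$ with $\mathrm{Re}(\lambda') = \mathrm{Re}(\lambda) = -\tfrac{1}{2}$. Each listed generator lies in $\mathrm{Aut}\bigl(\Lambda(\overline{\lambda},\lambda)\bigr)$: the coordinate permutations and sign changes preserve $\Lambda(\overline{\lambda},\lambda)$ by construction (the single-coordinate sign changes because $2\mathsf{O}\subset\mathsf{O}\overline{\lambda}$ and $2\mathsf{O}\subset\mathsf{O}\lambda$), while $\mathtt{W}_{(1,1,\lambda')}\in\mathrm{Aut}\bigl(\Lambda(\overline{\lambda},\lambda)\bigr)$ by Theorem~\ref{refltheorem}. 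Hence $G \subseteq 2\cdot\mathrm{Co}_1$, and it suffices to prove the reverse inclusion, which I would do by exhibiting inside $G$ the $96$ reflections that generate $2\cdot\mathrm{Co}_1$ in Theorem~\ref{Co1gensA}.

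First I would record the arithmetic fact that every norm~$2$ element $\mu\in\mathsf{O}$ with $\mu\equiv\lambda\mod 2\mathsf{O}$ has $\mathrm{Re}(\mu)\in\{-\tfrac{1}{2},\tfrac{1}{2}\}$: writing $\mu = \lambda + 2\xi$ with $\xi\in\mathsf{O}$ gives $2\mathrm{Re}(\mu) = -1 + 2\bigl(2\mathrm{Re}(\xi)\bigr)$, an odd rational integer, while $\lvert 2\mathrm{Re}(\mu)\rvert \le 2\sqrt{N(\mu)} = 2\sqrt{2} < 3$. Together with $-\mu \equiv -\lambda \equiv \lambda \mod 2\mathsf{O}$, this shows the $16$ norm~$2$ representatives of $\lambda + 2\mathsf{O}$ form $8$ antipodal pairs $\{\nu,-\nu\}$, exactly one member of each having real part $-\tfrac{1}{2}$; those $8$ distinguished representatives are the $\lambda'$ used as generators of $G$.

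Next I would run the conjugation argument. For any isometry $g$ of $\mathsf{O}^3$ one has $g\,\mathtt{W}_r\,g^{-1} = \mathtt{W}_{g(r)}$, and $g(r)$ is again a commutative vector when $g\in 2\times S_4$ and $r$ is, so $G$ contains $\mathtt{W}_{g(1,1,\lambda')}$ for every coordinate automorphism $g$. The sign changes carry $(1,1,\lambda')$ to $(\pm1,\pm1,\pm\lambda')$, and by the lemma $\{\pm\lambda' : \mathrm{Re}(\lambda') = -\tfrac{1}{2}\}$ exhausts the $16$ norm~$2$ representatives of $\lambda+2\mathsf{O}$; the coordinate permutations move the non-unit entry into any of the three positions. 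Up to the identification $\mathtt{W}_r = \mathtt{W}_{-r}$, the resulting conjugates are exactly the $3\cdot 2\cdot 2\cdot 16 / 2 = 96$ reflections $\mathtt{W}_r$ with $r = (\pm1,\pm1,\lambda')$ under all coordinate permutations. By Theorem~\ref{Co1gensA} these generate $2\cdot\mathrm{Co}_1$, so $G \supseteq 2\cdot\mathrm{Co}_1$, and with the first paragraph $G = 2\cdot\mathrm{Co}_1$.

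The only place requiring care is the orbit bookkeeping of the last step: one must be sure the $2\times S_4$-closure of the eight chosen vectors recovers all $16$ residue-class representatives \emph{and} all three coordinate positions, which is precisely where the real-part lemma enters. Everything else is formal once Theorems~\ref{refltheorem} and~\ref{Co1gensA} are in hand. As an independent sanity check I would, as in the rest of the section, realize $G$ on a canonical copy of $\mathsf{O}^3 \cong \mathbb{R}^{24}$ in GAP and verify $\lvert G\rvert = \lvert 2\cdot\mathrm{Co}_1\rvert$; since all generators lie in $\mathrm{Aut}\bigl(\Lambda(\overline{\lambda},\lambda)\bigr)$, matching orders again forces equality. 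But I would present the reduction to Theorem~\ref{Co1gensA} as the actual proof.
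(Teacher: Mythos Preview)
Your argument is correct, and it is genuinely different from what the paper does. The paper offers no proof of Theorem~\ref{Co1gensB} beyond the sentence preceding it: ``Computation in GAP on a canonical example verifies the following two theorems\ldots'', so both Theorem~\ref{Co1gensA} and Theorem~\ref{Co1gensB} are established there by independent machine verification. You instead reduce Theorem~\ref{Co1gensB} to Theorem~\ref{Co1gensA} by a clean elementary step: the conjugation identity $g\,\mathtt{W}_r\,g^{-1} = \mathtt{W}_{g(r)}$ for $g$ in the signed-permutation group $2\times S_4$, together with the real-part lemma showing that $\{\pm\lambda' : \mathrm{Re}(\lambda') = -\tfrac12\}$ recovers the full frame $\lambda + 2\mathsf{O}$, exhibits all $96$ reflections of Theorem~\ref{Co1gensA} inside your group $G$. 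This buys an explanation of \emph{why} the smaller generating set suffices, rather than a second black-box order check; the paper's approach buys only certainty via computation. Your closing GAP sanity check is in fact exactly the paper's proof, so you lose nothing by including it. One minor point of presentation: the identity $g\,\mathtt{W}_r\,g^{-1} = \mathtt{W}_{g(r)}$ holds for any real-orthogonal $g$ on $\mathsf{O}^3\cong\mathbb{R}^{24}$ simply because $\mathtt{W}_r$ is a real hyperplane reflection; you might say that rather than ``for any isometry $g$ of $\mathsf{O}^3$'', since the octonionic formula $\mathtt{W}_r(x)=x(I-2[r])$ is only available when $g(r)$ remains commutative, which you then have to note separately.
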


From these facts it follows that all automorphisms of $\Lambda(\overline{\lambda},\lambda)$ are compositions of octonion reflection automorphisms. Although the Leech lattice does not contain any real reflection symmetries, the octonion Leech lattice $\Lambda(\overline{\lambda},\lambda)$ has octonion reflection symmetries that generate the full automorphism group. 

We can also describe automorphisms of any octonion Leech lattice of the form $\Lambda(s,s')$. 
Since by Theorem \ref{leechulambda} we know that $\Lambda(s,s') = \mathtt{L}_u \Lambda(\overline{\lambda},\lambda)$ we also have $\mathtt{L}_{\overline{u}}\Lambda(s,s') = \Lambda(\overline{\lambda},\lambda)$. Let $r = (1,1,\lambda')$ for any $\lambda' \equiv \lambda \mod 2\mathsf{O}$, under any coordinate permutation and sign change.
Then $\mathtt{W}_r$ acting on $\mathbb{O}^3$ is an automorphism of $\Lambda(\overline{\lambda}, \lambda)$ and $\mathtt{L}_u \mathtt{W}_r \mathtt{L}_{\overline{u}}$ is an automorphism of $\mathtt{L}_u \Lambda(\overline{\lambda},\lambda)$. We can generate the automorphism group $2\cdot \mathrm{Co}_1$ of Leech lattice $\mathtt{L}_{u}\Lambda(\overline{\lambda},\lambda)$ using involutions of the form $\mathtt{L}_u \mathtt{W}_r \mathtt{L}_{\overline{u}}$. 
In the special case where $u = 1$, the involution $\mathtt{L}_u \mathtt{W}_r \mathtt{L}_{\overline{u}}$ becomes an octonion reflection.

\section{The Common Construction and Octonion Integer Triples}

We now use the common construction of \cite{nasmith_octonions_2022} to describe the correspondence between certain structures in the octonion integer Leech lattice and generalized hexagons in the octonion projective plane. 
We will also describe an approach to generating the Suzuki chain of Leech lattice automorphism subgroups using octonion reflections. 

First introduced by Jacques Tits, a \textit{finite generalized $n$-gon} of order $(s,t)$ is a block design on $v$ points such that (1) each block contains $s+1$ points, (2) each point belongs to $t+1$ blocks, and (3) the point-block incidence graph has diameter $n$ and girth $2n$ (described also as a bipartite graph with diameter $n$ and girth $2n$ in \cite[5.6]{godsil_algebraic_2001}).
We will write $\mathrm{Gh}(s,t)$ to denote the \textit{generalized hexagon} ($n = 6$) of order $(s,t)$. 
An ordinary hexagon, taken as a block design with edges for blocks and vertices for points, is a $\mathrm{Gh}(1,1)$. 
Cohen demonstrates in \cite{cohen_exceptional_1983} that the generalized hexagons $\mathrm{Gh}(2,1)$, $\mathrm{Gh}(2,2)$, and $\mathrm{Gh}(2,8)$ can be constructed as Jordan frames (i.e., three orthogonal primitive idempotents) in the octonion projective plane. 
We will show that the common construction of \cite{nasmith_octonions_2022} puts these generalized hexagon structures in correspondence with certain integral octonion triples with reflections that generate Suzuki chain subgroups of Leech lattice automorphisms.

Second, the automorphism group of the Leech lattice is the Conway group $\mathrm{Co}_0 = 2 \cdot \mathrm{Co}_1$. The tight $5$-design in $\mathbb{RP}^{23}$ corresponding to the $98280$ lines spanned by the Leech lattice short vectors has the sporadic simple group $\mathrm{Co}_1$ for an isometry group. 
The Suzuki chain is a series of subgroups of $\mathrm{Co}_1$ constructed in the following manner \cite[p. 219]{wilson_finite_2009}. 
The group $\mathrm{Co}_1$ has a maximal subgroup $A_9 \times S_3$. The symmetric group $S_3$ centralizes $A_9$ in $\mathrm{Co}_1$. The chain of subgroups $A_9 > A_8 > A_7 > \cdots > A_4 > A_3$ has the following corresponding chain of centralizers in $\mathrm{Co}_1$, known as the \textbf{Suzuki chain}:
\begin{align*}
    S_3 < S_4 < \mathrm{PSL}_2(7) < \mathrm{PSU}_3(3) < \mathrm{HJ} < G_2(4) < 3 \cdot \mathrm{Suz}.
\end{align*}
Here $\mathrm{HJ}$ is the Hall-Janko sporadic simple group and $\mathrm{Suz}$ is the Suzuki sporadic simple group. 
In what follows we will see how the group $\mathrm{Co}_1$ and the Suzuki chain subgroups can be constructed as octonion reflection groups, acting projectively on $\mathbb{R}^{24}/\{\pm 1\} \cong \mathbb{O}^3 /\{\pm 1\}$, generated by reflections on $\mathbb{O}^3$ of the form $\mathtt{W}_r$ for suitable choices of octonion triple $r$. 

\begin{remark}
    Consider the following related sequence of groups:
    \begin{align*}
        \mathrm{PSU}_3(3) < \mathrm{HJ} < G_2(4) < \mathrm{Suz}.
    \end{align*} 
    The groups in this sequence have permutation representations respectively of degrees $63$, $100$, $416$, and $1782$.
    The $\mathrm{Suz}$ group action on $1782$ points has rank $3$, and the point stabilizer is the group $G_2(4)$ with orbits of lengths $1 + 416 + 1365$. The action of $G_2(4)$ on the $416$ points also has rank $3$, and the point stabilizer is the group $\mathrm{HJ}$ with orbits of lengths $1+100+315$. 
    The action of $\mathrm{HJ}$ on the $100$ points also has rank $3$, and the point stabilizer is the group $\mathrm{PSU}_3(3)$ with orbits of lengths $1+36+63$.
    The action of $\mathrm{PSU}_3(3)$ on both the $36$ or $63$ length orbits is rank $4$, so the sequence of rank $3$ permutation groups stops (contrary to \cite[p. 124]{griess_twelve_1998}). As mentioned in \cite[p. 124]{griess_twelve_1998}, there is no rank $3$ permutation group containing $\mathrm{Suz}$ as a point stabilizer.
\end{remark}

Having described generalized hexagons and the Suzuki chain of Leech lattice symmetries, we now define certain octonion integer triples for use in the common construction in order to link the two concepts. 
Consider the following vectors with $\lambda$ some zero of $x^2 + x + 2$ in $\mathsf{O}$:
\begin{align*}
    S_\lambda = \left\{ (2, 0, 0), (\overline{\lambda}, 0, \overline{\lambda}), (1,1,\lambda) \right\}.
\end{align*}
Since the coefficients belong to a common complex subalgebra $\mathbb{C}$, we have $S_\lambda$ in $\mathbb{C}^3 \subset \mathbb{O}^3$ and the projectors $\{[x] \mid x \in S_\lambda\}$ in the complex projective subplane $\mathbb{CP}^2 \subset \mathbb{OP}^2$.

\begin{example}
    The common construction of \cite{nasmith_octonions_2022} applied to $\{r_1, r_2, r_3\} = S_\lambda$ yields $G = 2 \times \mathrm{PSL}_2(7)$ acting on $\mathbb{C}^3$, with a $42$ point design on $\Omega_6$. 
    The common construction also yields $H = \mathrm{PSL}_2(7)$ acting on $\mathbb{CP}^2$ and the orbit of the projectors of $S_\lambda$ define a $21$ point design. 
    The $21$ {points} of the design in $\mathbb{CP}^{2}$, and the blocks of three mutually orthogonal points, form a $\mathrm{Gh}(2,1)$ structure. Here we have a link between the Suzuki chain group $H = \mathrm{PSL}_2(7) \cong G / \{\pm 1\}$ and a generalized hexagon $\mathrm{Gh}(2,1)$ of Jordan frames on $\mathbb{CP}^2$.
    \label{single}
\end{example}

\begin{remark}
    The $2$-design in $\mathbb{CP}^2$ of Example \ref{single} is equivalent to Example 12 of \cite{hoggar_t-designs_1982}. 
    However, this design is mislabeled in \cite{hoggar_t-designs_1982} as a $3$-design at the special bound. A calculation confirms that it is neither a 3-design nor at the special bound.
\end{remark}

\begin{remark}
    The group $G = 2 \times \mathrm{PSL}_2(7)$ acting on $\mathbb{C}^3$ of Example \ref{single} contains a real reflection subgroup $2 \times S_4$. Indeed, the $42$ vectors generated by $S_\lambda$ under reflection $\mathtt{W}_r$ are precisely the orbits of the vectors in $S_\lambda$ under all coordinate permutations and sign changes, which is the action of $2 \times S_4$. 
    The projectors generating these reflections are the nine contained in the real projective subplane $\mathbb{RP}^2 \subset \mathbb{CP}^2$, which form a $1$-design. 
    Their reflection action on $\mathbb{O}^3$ generate the $2 \times S_4$ coordinate symmetries of $\Lambda(\overline{\lambda},\lambda)$ discussed above.
\end{remark}

We have seen that the stabilizer of the residue class $\lambda + 2 \mathsf{O}$ in $\mathsf{O}$ (a subgroup of $\mathrm{Aut}(\mathsf{O})$) is a group of type $\mathrm{PSL}_2(7)$.
We may call this group a \textit{frame stabilizer} since it stabilizes the frame containing $\lambda$ in $\mathrm{Aut}(\mathsf{O})$.
Under the action of this $\mathrm{PSL}_2(7)$ frame stabilizer, the vector $\lambda$ has an orbit of length $8$. 
The corresponding scalar action on the set of triples $S_\lambda$ also has length $8$, as does the scalar action on the $\mathrm{Gh}(2,1)$ structures generated by reflections $\mathtt{W}_r$ for $r$ in $S_\lambda$ vectors. 
Each $S_{\lambda'}$ in this orbit is defined by $\lambda' \equiv \lambda \mod 2\mathsf{O}$ and $\mathrm{Re}(\lambda') = \mathrm{Re}(\lambda)$. 

The permutation action of $\mathrm{PSL}_2(7)$ on the eight $\lambda' \equiv \lambda \mod 2\mathsf{O}$ with $\mathrm{Re}(\lambda) = \mathrm{Re}(\lambda')$ is $2$-transitive. However, this action is transitive on \textit{unordered} subsets $\{\lambda_i, \lambda_j, \lambda_k,\ldots\}$ of size $n$, except when $n=4$.
That is, the frame stabilizer is also transitive on pairs $\{\lambda_i, {\lambda_j}\}$ and triples $\{{\lambda_i}, {\lambda_j}, {\lambda_k}\}$. 
It is therefore also transitive on subsets of cardinality $5,6,7,8$. 
In contrast, there are three orbits of quadruples $\{{\lambda_i}, {\lambda_j}, {\lambda_k}, {\lambda_l}\}$, with lengths $14 + 42 + 14$. 
Both length $14$ orbits define Steiner systems $S(3,4,8)$ on the eight $\lambda'$ with $\lambda' \equiv \lambda \mod 2\mathsf{O}$ and $\mathrm{Re}(\lambda') = \mathrm{Re}(\lambda)$.

\begin{figure}
    \begin{align*}
    \begin{array}{cccc}
    \{r_1, \ldots, r_n\}     & G/\{\pm 1\} \subset \mathrm{Co}_1 & H \subset F_4 & \text{Example}\\
    \hline \hline 
    S_{\lambda_i} & \mathrm{PSL}_2(7) & \mathrm{PSL}_2(7) & \ref{single}
    \\
    S_{\lambda_i} \cup S_{\lambda_j} & \mathrm{PSU}_3(3) & \mathrm{PSU}_3(3) & \ref{double}
    \\
    S_{\lambda_i} \cup S_{\lambda_j} \cup S_{\lambda_k} & \mathrm{HJ} & ^3 D_4(2) & \ref{triple}
    \\
    S_{\lambda_i} \cup S_{\lambda_j} \cup S_{\lambda_k} \cup S_{\lambda_l} & G_2(4) & ^3 D_4(2) & \ref{quadrupleA}
    \\
    S_{\lambda_i} \cup S_{\lambda_j} \cup S_{\lambda_k} \cup S_{\lambda_l} & G_2(4) & & \ref{quadrupleB}
    \\
    S_{\lambda_i} \cup S_{\lambda_j} \cup S_{\lambda_k} \cup S_{\lambda_l} \cup S_{\lambda_m} & 3\cdot \mathrm{Suz} & & \ref{quintuple} 
    \\
    S_{\lambda_i} \cup S_{\lambda_j} \cup S_{\lambda_k} \cup S_{\lambda_l} \cup S_{\lambda_m} \cup S_{\lambda_n} & \mathrm{Co}_1 & & \ref{sextuple}\\
    \hline
    \end{array}
\end{align*}
    \caption{The common construction of \cite{nasmith_octonions_2022} applied to orbits of combinations of $S_\lambda$ under $\mathrm{PSL}_2(7) \subset \mathrm{Aut}(\mathsf{O})$. }
    \label{resultstable}
\end{figure}

We can now describe the Suzuki chain groups and their correspondence to certain generalized hexagons via the common construction of Definition \ref{commonconstruction}. 
In each case, the result is obtained by computation in GAP on a representative example.

\begin{example}
    If we apply the common construction to any pair $S_{\lambda_i} \cup S_{\lambda_j}$ we obtain $G/\{\pm 1\} = \mathrm{PSU}_3(3)$ and $H = \mathrm{PSU}_3(3)$. The corresponding design on $\mathbb{HP}^2 \subset \mathbb{OP}^2$ defines a $\mathrm{Gh}(2,2)$ finite geometry.
    \label{double}
\end{example}

\begin{example}
    If we instead apply the common construction to any triple $S_{\lambda_i} \cup S_{\lambda_j} \cup S_{\lambda_k}$ we obtain $G/\{\pm 1\} = \mathrm{HJ}$ and $H$ is $^3D_4(2)$. The corresponding design on $\mathbb{OP}^2$ defines a $\mathrm{Gh}(2,8)$ finite geometry. 
    \label{triple}
\end{example}

\begin{example}
One of the two Steiner systems on quadruples $\{{\lambda_i}, {\lambda_j}, {\lambda_k}, {\lambda_l}\}$ has the special property that $\{r_1, \ldots, r_n\} = S_{\lambda_i} \cup S_{\lambda_j} \cup S_{\lambda_k} \cup S_{\lambda_l}$ yields the two strictly projective tight $5$-designs under the common construction of \cite{nasmith_octonions_2022}. 
That is, for one of the two length $14$ orbits of quadruples,the common construction applied to the vectors of $S_{\lambda_i} \cup S_{\lambda_j} \cup S_{\lambda_k} \cup S_{\lambda_l}$ yields $G/\{\pm 1\} = G_2(4)$ and $H$ is $^3D_4(2)$. The corresponding spherical design on $\Omega_{24}$ is the same as that given by the short vectors of Leech lattice $\Lambda(\overline{\lambda},\lambda)$, with the corresponding tight projective $5$-design in $\mathbb{RP}^{23}$. 
The corresponding design on $\mathbb{OP}^2$ defines a $\mathrm{Gh}(2,8)$ finite geometry. 
This is the same finite geometry and $^3 D_4(2)$ group obtained by the common construction using any three $S_{\lambda_i}, S_{\lambda_j}, S_{\lambda_k}$ contained in the quadruple.
\label{quadrupleA}
\end{example}

\begin{example}
    The quadruples $\{{\lambda_i}, {\lambda_j}, {\lambda_k}, {\lambda_l}\}$ of the other length $14$ orbit and of the length $42$ orbit no longer yield the $\mathrm{Gh}(2,8)$ finite geometry on $\mathbb{OP}^2$ under the common construction, although they still yield $G/\{\pm 1\} = G_2(4)$ and the Leech lattice short vectors on $\mathbb{O}^3$. It is unclear whether the projectors of the generating set have a finite orbit under the action of corresponding group $H$, or whether the group $H$ is finite. 
    \label{quadrupleB}
\end{example}

\begin{example}
Any quintuple $S_{\lambda_i} \cup \ldots \cup S_{\lambda_j}$ yields $G/\{\pm 1\} = 3 \cdot \mathrm{Suz}$ and the same spherical design on $\Omega_{24}$ as its $G_2(4)$ subgroups do. 
\label{quintuple}
\end{example}

\begin{example}
Any sextuple $S_{\lambda_i} \cup \ldots \cup S_{\lambda_j}$ yields $G/\{\pm 1\} = \mathrm{Co}_1$ and the same spherical design on $\Omega_{24}$ as its $3 \cdot \mathrm{Suz}$ subgroups do. The same is true of a septuple or the union of the full set of eight $S_\lambda$. 
\label{sextuple}
\end{example}

The results described above are summarized in Fig. \ref{resultstable}. The Suzuki chain subgroups satisfy the following theorem.

\begin{theorem}
    The Suzuki chain subgroups of $\mathrm{Co}_1$,
    \begin{align*}
        \mathrm{PSL}_2(7) < \mathrm{PSU}_3(3) < \mathrm{HJ} < G_2(4) < 3\cdot \mathrm{Suz},
    \end{align*}
    are the quotients modulo $\{\pm 1\}$ of octonion reflection groups generated by reflections $\mathtt{W}_r$ acting on $\mathbb{O}^3$ with $r$ in the union of any subset of $\{S_{\lambda_i}, \cdots, S_{\lambda_j}\}$ of cardinality respectively $1,2,3,4,5$.
    \label{Suzukichain}
\end{theorem}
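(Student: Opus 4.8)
The plan is to obtain this theorem as a compilation of Examples~\ref{single}, \ref{double}, \ref{triple}, \ref{quadrupleA}--\ref{quadrupleB}, and~\ref{quintuple}, after first checking that the reflection group attached to a union of $S_\lambda$'s depends on the chosen subset of residue representatives only up to conjugacy in $\mathrm{Aut}(\Lambda(\overline\lambda,\lambda))=2\cdot\mathrm{Co}_1$ (and, in the one delicate case, only up to isomorphism). Fix the eight octonion integers $\lambda_1,\ldots,\lambda_8$ with $\lambda_i\equiv\lambda\bmod 2\mathsf{O}$ and $\mathrm{Re}(\lambda_i)=\mathrm{Re}(\lambda)$; for $I\subseteq\{1,\ldots,8\}$ put $R_I=\bigcup_{i\in I}S_{\lambda_i}$ and $G_I=\langle\mathtt{W}_r : r\in R_I\rangle$ acting on $\mathsf{O}^3$. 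I would then establish three things: (i) every $G_I$ is a finite subgroup of $2\cdot\mathrm{Co}_1$ containing $-1$; (ii) for $\abs{I}\in\{1,2,3,5\}$ the conjugacy class of $G_I$ in $2\cdot\mathrm{Co}_1$ does not depend on $I$, and for $\abs{I}=4$ its isomorphism type does not; (iii) the quotients $G_I/\{\pm 1\}$ realize, as $\abs{I}=1,2,3,4,5$, the nested chain $\mathrm{PSL}_2(7)<\mathrm{PSU}_3(3)<\mathrm{HJ}<G_2(4)<3\cdot\mathrm{Suz}$ inside $\mathrm{Co}_1$.

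For (i): each generator of an $S_{\lambda_i}$ is a short vector of $\Lambda(\overline\lambda,\lambda)$ whose reflection preserves the lattice. Indeed $(2,0,0)$ has type $(2u,0,0)$ and $(\overline{\lambda_i},0,\overline{\lambda_i})$ has type $(s,0,s)$ with $s=\overline{\lambda_i}$ a root in $\mathsf{O}\overline\lambda$, so both are among the coordinate symmetries of Figure~\ref{fig:my_label}; and $(1,1,\lambda_i)$ gives a Leech automorphism by Theorem~\ref{refltheorem} (applicable since $\lambda_i\equiv\lambda\bmod 2\mathsf{O}$). Hence $G_I\le\mathrm{Aut}(\Lambda(\overline\lambda,\lambda))=2\cdot\mathrm{Co}_1$, so $G_I$ is finite, which is what the common construction of Definition~\ref{commonconstruction} requires; and $-1\in G_I$ because $G_I$ contains the $2\times S_4$ of coordinate sign changes and permutations (remark after Example~\ref{single}), whose three coordinate sign changes multiply to $-1$. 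Thus $G_I/\{\pm 1\}\le\mathrm{Co}_1$.

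For (ii): any $\sigma$ in the $\mathrm{PSL}_2(7)$ frame stabilizer of $\lambda+2\mathsf{O}$ (the scalar automorphisms of $\Lambda(\overline\lambda,\lambda)$) acts diagonally on $\mathsf{O}^3$ as an octonion algebra automorphism, so $\sigma\,\mathtt{W}_r\,\sigma^{-1}=\mathtt{W}_{\sigma(r)}$; moreover $\sigma(\mathsf{O}\lambda)=\mathsf{O}\sigma(\lambda)=\mathsf{O}\lambda$ and likewise $\sigma(\mathsf{O}\overline\lambda)=\mathsf{O}\overline\lambda$ by Lemma~\ref{OssOlemma}, so $\sigma$ preserves $\Lambda(\overline\lambda,\lambda)$, and since $\sigma(S_{\lambda_i})=S_{\sigma(\lambda_i)}$ we get $\sigma G_I\sigma^{-1}=G_{\sigma(I)}$. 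By the transitivity of this $\mathrm{PSL}_2(7)$ on unordered $n$-subsets for $n=1,2,3,5$, all $G_I$ with such a fixed $\abs{I}$ are conjugate in $2\cdot\mathrm{Co}_1$; for $\abs{I}=4$ there are three orbits of subsets, but Examples~\ref{quadrupleA} and~\ref{quadrupleB} together assert that each orbit still yields $G_I/\{\pm 1\}\cong G_2(4)$, so the isomorphism type is unambiguous. For (iii): the GAP computations recorded in Examples~\ref{single}, \ref{double}, \ref{triple}, \ref{quadrupleA}, and~\ref{quintuple} identify $G_I/\{\pm 1\}$ with $\mathrm{PSL}_2(7),\mathrm{PSU}_3(3),\mathrm{HJ},G_2(4),3\cdot\mathrm{Suz}$ for $\abs{I}=1,2,3,4,5$. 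The chain inclusions are immediate: if $I\subseteq J$ then $R_I\subseteq R_J$, so $G_I\le G_J$, and this survives the quotient by $\{\pm 1\}$; picking a flag of subsets of sizes $1,\ldots,5$ realizes $\mathrm{PSL}_2(7)\le\mathrm{PSU}_3(3)\le\mathrm{HJ}\le G_2(4)\le 3\cdot\mathrm{Suz}$, with strict inclusions since the five orders are strictly increasing. That this chain is the Suzuki chain (the centralizers in $\mathrm{Co}_1$ of $A_7>A_6>A_5>A_4>A_3$) follows by matching conjugacy classes --- for instance by exhibiting the commuting alternating subgroup, or by direct comparison --- which is part of the same GAP verification.

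The hard part is step (iii)'s group identification: recognizing the $24$-dimensional real reflection group $G_I$ as, say, $\mathrm{HJ}$ or $3\cdot\mathrm{Suz}$ and confirming that the resulting chain is the Suzuki chain is exactly what the cited examples do by machine. A computer-free argument would have to build these simple groups and their chain inclusions by hand --- for instance through the $A_n$-centralizer description of the Suzuki chain --- whereas everything else here is bookkeeping: the transitivity of the $\mathrm{PSL}_2(7)$ frame stabilizer on small subsets (which legitimizes ``any subset''), the collapse of the three quadruple orbits to the single type $G_2(4)$, and the elementary fact that containments of generating sets give containments of the generated reflection groups.
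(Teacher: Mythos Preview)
Your proposal is correct and follows essentially the same approach as the paper, whose proof is the single sentence ``The proof involves checking representative examples in GAP.'' You have simply made explicit the reductions that justify the word ``representative'': the $\mathrm{PSL}_2(7)$ frame-stabilizer transitivity on unordered $n$-subsets for $n\neq 4$, the three-orbit analysis for $n=4$ via Examples~\ref{quadrupleA}--\ref{quadrupleB}, and the containment $G_I\le G_J$ for $I\subseteq J$, all of which the paper leaves implicit and then defers the actual group identifications to the same GAP computations recorded in Examples~\ref{single}--\ref{quintuple}.
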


\begin{proof}
    The proof involves checking representative examples in GAP. \nocite{noauthor_gap_2020}
\end{proof}

\section{Conclusion}

We have seen that the Leech lattice can be constructed as a sublattice of octonion integer triples, determined up to unit $u$ and root $\lambda$ of $x^2 + x + 2$ taken modulo $2\mathsf{O}$, as described in Theorem \ref{leechulambda}.
We have also seen that the automorphism group of the Leech lattices with $u=1$ are generated by octonion reflections $\mathtt{W}_r$, as described in Theorems \ref{Co1gensA} and \ref{Co1gensB}.
For $u \ne 1$ these generating reflections are replaced by generating involutions $ \mathtt{L}_u \mathtt{W}_r \mathtt{L}_{\overline{u}} $. 
We have also seen that this Leech lattice construction (for $u = 1$) provides a simple means of generating the two strictly projective tight 5-designs, given in Example \ref{quadrupleA}. Specifically, the generating reflections used in the common construction satisfy a $S(3,4,8)$ Steiner system structure determined via $\mathrm{PSL}_2(7)$ octonion integer automorphisms stabilizing $\lambda + 2 \mathsf{O}$. 
Finally, we see that the Suzuki chain subgroups of $\mathrm{Co}_1$ have a simple octonion reflection construction given by Theorem \ref{Suzukichain}. The following remarks describe two remaining open questions.

\begin{remark}
    Using the notation given above, the element $\mathtt{W}_r$ for $r = (1,1,\lambda')$ is an involution in $2\cdot \mathrm{Co}_1$. What is the conjugacy class of this involution? Likely the full conjugacy class contains involutions that are not octonion reflections.
    Of note, an octonion reflection may have determinant $-1$ as an octonion matrix even though the $24 \times 24$ real matrix for the endomorphism acting on $\mathsf{O}^3$ as a real $24$-vector has determinant $1$.
\end{remark}

\begin{remark}
    An open question is whether there is some algebraic property of $\{\lambda_i, \lambda_j, \lambda_k, \lambda_l\}$ in the $S(3,4,8)$ of Example \ref{quadrupleA} that yields the two tight projective $5$-designs that distinguishes it from the other length $14$ orbit (which is also an $S(3,4,8)$) in Example \ref{quadrupleB}. 
\end{remark}

\bibliographystyle{amsalpha}
\bibliography{Bibliography}

\end{document}